\newtheorem{theorem}{Theorem}[section]
\newtheorem{lemma}{Lemma}[section]
\newtheorem{proposition}{Proposition}[section]
\theoremstyle{definition}
\newtheorem{example}[theorem]{Example}
\theoremstyle{remark}
\newtheorem{remark}[theorem]{Remark}
\numberwithin{equation}{section}
\newcommand{\abs}[1]{\lvert#1\rvert}
\newcommand{\R}{\mathbb{R}}
\newcommand{\C}{\mathbb{C}}
\newcommand{\Z}{\mathbb{Z}}
\newcommand{\N}{\mathbb{N}}
\newcommand{\ov}{\overline}
\newcommand{\begeq}{\begin{equation}}
\newcommand{\stopeq}{\end{equation}}
\newcommand{\ep}{\epsilon}
\newcommand{\ci}{\mathbb{S}^1}
\newcommand{\dis}{\displaystyle}
\newcommand{\pa}{\partial}
\newcommand{\ei}[1]{\mathrm{e}^{#1}}
\newcommand{\dd}[2]{\dis\frac{\pa #1}{\pa #2}}
\newcommand{\Om}{\Omega}
\newcommand{\ta}{\theta}
\newcommand{\loj}{{\L}ojasiewicz}
\newcommand{\Si}{\Sigma}
\newcommand{\begar}{\begin{array}}
\newcommand{\stopar}{\end{array}}
\newcommand{\ke}{\mathcal{K}_{(x,y)}(\xi,\eta)}
\newcommand{\norm}[1]{\left\lVert#1\right\rVert}
\begin{document}
\title[Hypocomplex]
{A {\L}ojasiewicz Inequality in Hypocomplex Structures
of $\R^2$.}

\author{Abdelhamid Meziani}
\address{\small Department of Mathematics, Florida International University, Miami, FL, 33199, USA}
\email{meziani@fiu.edu}

\subjclass[2020]{Primary 35C15; Secondary 35F05}

\keywords{{\L}ojasiewicz Inequality, Hypocomplex, Solvability, Vector field}

\begin{abstract}
For a real analytic complex vector field $L$, in an open set of $\R^2$, with local first
integrals that are open maps, we attach a number
$\mu\ge 1$ (obtained through \loj\ inequalities) and show that the equation
$Lu=f$ has bounded solution when $f\in L^p$ with $p>1+\mu$.
We also establish a similarity principle between the bounded solutions of
the equation $Lu=Au+B\ov{u}$ (with $A,B\in L^p$) and holomorphic functions.
\end{abstract}
\maketitle

\section{Introduction}
This paper deals with the solvability of real analytic complex vector fields defined
in an open set $\mathcal{O}\subset\R^2$.
We assume that the vector field $L$ has local first integrals that are open maps everywhere.
This implies the existence of a global first integral $Z$ in $\mathcal{O}$ and allows us to
take $L$ as $L=Z_x\pa_y-Z_y\pa_x$. We associate to $L$ a number $\mu\ge 1$ (in fact
$\mu >1$ unless $L$ is elliptic everywhere).
 This number is derived from \loj\ inequalities for first integrals.
 We use a generalized Cauchy operator
 \[
 T_Zf(x,y)=\frac{-1}{\pi}\int_\Om f(\xi,\eta)\ke \, d\xi d\eta =
 \frac{-1}{\pi}\int_\Om \frac{f(\xi,\eta)\, d\xi d\eta}{Z(\xi,\eta)-Z(x,y)}
 \]
 to prove that for every $f\in L^p(\Om)$, with $p>1+\mu$, we have $u=T_Zf\, \in L^\infty(\Om)$
 and satisfies the equation $Lu=f$. We also prove a similarity between $L^\infty$-solution
 of the equation $Lu=Au+B\ov{u}$, where $A,B\in L^p(\Om)$, and the holomorphic functions $H$ defined
 in $Z(\Om)$. More precisely the solution $u$ can be written as $u=H(Z)\ei{s}$ with
 $H$ holomorphic in $Z(\Om)$ and $s\in L^\infty(\Om)$.

 The questions addressed in this paper are related to those contained in  papers:
 \cite{Beg},  \cite{Ber-Mez}, \cite{BCH}, \cite{BHS},  \cite{Bers},   \cite{CDM1}, \cite{CDM2},  \cite{CaMez},
 \cite{HZ}, \cite{Mez1}, \cite{Mez2}, \cite{Mez3}, \cite{Mez4}, \cite{Mez5},
 \cite{MAB}, \cite{MezZ}, \cite{Tre}
and others.

The organization of this paper is as follows. Section 2 deals with the necessary
background and terminology. In section 3 we derive normal forms for first integrals
that will be used in section 4 to introduce a \loj\  number $\mu$ for the vector field.
In section 5, we show that for $\dis 1<q<1+\frac{1}{\mu}$, the generalized Cauchy kernel
$\ke$ satisfies $\norm{\mathcal{K}_{(x,y)}}_{L^q(\Om)}\le C$ for all $(x,y)\in\mathcal{O}$
where $C$ is a positive constant depending on $\Om$, $q$ and $\mu$.
In section 6, we construct bounded solution of the equation $Lu=f$
for $f\in L^p(\Om)$, with $p>1+\mu$, and establish a similarity principle.

\section{Preliminaries}
In this section we set the necessary background and terminology used
in the study of Hypoanalytic structures (see \cite{BCH} and \cite{Tre}).
Let $\mathcal{O}\subset\R^2$ be open and connected and let
\begeq
L=A(x,y)\dd{}{x}+B(x,y)\dd{}{y}
\stopeq
with $A,B\,\in\, C^\varpi(\mathcal{O};\C)$ ($A,\, B$ real analytic and $\C$-valued in $\mathcal{O}$).
We assume that $L$ has no singular points: $|A|+|B|>0$ everywhere in $\mathcal{O}$.
The conjugate of $\ov{L}$ is the vector field $\ov{A}\pa_x+\ov{B}\pa_y$, where $\ov{A}$ and $\ov{B}$ are
the complex conjugates of $A$ and $B$. The vector field $L$ is elliptic at all points where
$L$ and $\ov{L}$ are independent. At each elliptic point, $L$ is locally conjugate to the CR operator $\dis\dd{}{\ov{z}}$.
Denote by $\Si$ the set of non elliptic points. Thus $\Si$ is the real analytic variety given by
\begeq
\Si =\left\{ (x,y)\in \mathcal{O}:\ \textrm{Im}(A(x,y)\ov{B(x,y)})=0\right\}\, .
\stopeq
A point $p\in\Si$ is of finite type if there exists a vector field $X$ in the Lie algebra generated by
$L$ and $\ov{L}$ such that $X$ and $L$ are independent at $p$. This means that there exists a Lie bracket
\[
X=\left[X_1,\left[X_2,\,\cdots\, \left[X_{n-1},X_{n}\right]\right]\right]
\]
with $X_i=L$ or $X_i=\ov{L}$, for $i=1,\cdots, n$, such that $X$ and $L$ are independent at p.
The smallest such $n$ is the type of $L$ at $p$. If no such $X$ exists, $L$ is said to be of
infinite type at $p$.

Since $L$ is real analytic and non singular, then it is locally integrable. That is, for every
$p\in\mathcal{O}$, there exists a $C^\varpi$-function $Z_p$ defined in an open neighborhood  $U_p$ of
$p$ such that $LZ_p(x,y)=0$ and $dZ_p(x,y)\ne 0$ for all $(x,y)\in U_p$ (the existence of such $Z_p$ in
the real analytic category is a direct consequence of the local straightening of the complexified
vector field $\hat{L}$ of $L$ in $\C^2$). It should be mentioned that in the $C^\infty$-category
the local integrability does not always hold. The simplest vector field for which the local integrability
fails is given by a $C^\infty$ perturbation at $0$ of the Mizohata vector field
$\pa_y-iy\pa_x$ (see \cite{Tre}).

From now on, we will assume that $L$ is of finite type at each point on $\Si$ and that for each
$p\in\Si$, the local first integral $Z_p$ is a homeomorphism from
$U_p$ to $Z(U_p)$. In fact the openness of $Z_p$ is equivalent to saying that the finite type of
$L$ at $p$ is an odd number. This is also equivalent to $L$ of finite type satisfies the Nirenber-Treves
condition($\mathcal{P}$) (see \cite{BCH} or \cite{Tre}). Such an $L$ for which the local first integrals are
homeomorphisms is called hypocomplex. A fundamental property of a hypocomplex vector field $L$ is the following:
If $u\in L^\infty(V)$ and $Lu=0$, where $V\subset\mathcal{O}$ is open,  then for $p\in V$ and every
first integral $Z_p:U_p\longrightarrow \C$, there exists a holomorphic function $h_p$ defined on
$Z_p(U_p\cap V)$ such that $u=h_p\circ Z_p$ in $U_p\cap V$.
This implies that the collection of charts $(U_p,Z_p)_{p\in\mathcal{O}}$ define a structure of a
Riemann surface on the open set $\mathcal{O}$.
It follows from the uniformization of planar Riemann surfaces (see \cite{Spr} or \cite{Sim} for a shorter
version of the proof) that $L$ has a global first integral. That is, there exists an injective
 $C^\varpi$-function $Z:\mathcal{O}\, \longrightarrow\, \C$ such that
 $LZ=0$ and $dZ\ne 0$ everywhere in $\mathcal{O}$.

\begin{remark}
It should be pointed out that in the paper \cite{BergCH} the authors construct a global first
integral for a vector field by assuming the ambient domain to be simply connected and using the
Reimann Mapping Theorem. In this paper, we don't assume simple connectedness
of the domain. But only that the generated Riemann surface is planar. Such Riemann surface have
a global parametrization (\cite{Spr}, \cite{Sim}).
\end{remark}

Without loss of generality, we will assume that
 \begeq\label{VecL}
 L=Z_x(x,y)\dd{}{y}-Z_y(x,y)\dd{}{x}
 \stopeq
 The characteristic set $\Si=\left\{\textrm{Im}(Z_x\ov{Z_y})=0\right\}$ is an analytic
 variety of dimension $\le 1$. We can decompose $\Si$ as $\Si=\Si^0\cup \Si^1$, where
 $\Si^0$ (if not empty) is an analytic variety of dimension 0 (so a collection of isolated points),
 and $\Si^1$ (if not empty) is a variety of dimension 1. The variety $\Si^1$ can be stratified as
 $\Si^1=\Si^1_R\cup S$, where $\Si^1_R$ is the regular part (union of 1-dimensional manifolds)
 and $S$ is the set of singular points (union of isolated points that are boundary points of
 components of $\Si^1_R$). Thus
 \begeq\label{Sigma}
\Si =\Si^0\cup\Si^1_R\cup S\, .
\stopeq
Moreover, for each connected component $\Gamma$ of $\Si^1_R$, the vector field $L$ has constant
finite type $k(\Gamma)$ at each point $p\in\Gamma$.  Indeed, for $p\in \Gamma$, there exists
local coordinates $(x,y)$ in an open neighborhood $U$ of $p$ such that $\Si\cap U =\{ y^l=0\}$ (for some $l\in \Z^+$.
The type of the vector
field thus constant on $\Gamma\cap U$ and this constant type propagates along $\Gamma$ until the curves ends at a singular point
or reaches the boundary of the domain. Note that the decomposition of $\Si$ is algebraic so that it could be geometrically
a one-dimensional manifold with the singular set $S\ne \emptyset$ (this is the case near 0 in the  example below).

 \begin{example}
 Let
 \[
 P(x,y)=y^2(x^2+y^2)(y-1+x^2)^6(x^2+(y+1)^2)(x^2+(y-3)^2-1)^2\, .
 \]
 Consider the function
 \[
 Z(x,y)=x+iQ(x,y)=x+i\int_0^yP(x,\tau)d\tau\, .
 \]
 Note since $P\ge 0$, then for each fixed $x$, the function $Q(x,.):\R\longrightarrow\R$
 is a homeomorphism. Thus $Z:\R^2\longrightarrow\C$ is a homeomorphism. The characteristic
 set of the hypocomplex structure defined by $Z$ is given by
 $\Si=P^{-1}(0)$. The decomposition of $\Si$ is as follows:
 \begin{itemize}
 \item $\ \Si^0=\{ (0,-1)\}$;
 \item $\ S=\{ (-1,0), (0,0), (1,0)\}$
 \item $\ \Si^1_R= l_1\cup l_2\cup l_3\cup l_4\cup \gamma^1\cup\gamma_2\cup \gamma_3\cup C$, where
 $l_1,\cdots ,l_4$ are the connected components of the $(\R\times\{0\})\backslash S$,
 $\gamma_1,\cdots ,\gamma_3$ are the arc of the connected components $\{y=1-x^2\} \backslash S$,
 and $C$ is the circle $x^2+(y-3)^2=1$.
 \end{itemize}
 \begin{figure}[ht]
 \centering
\scalebox{0.12} {\includegraphics{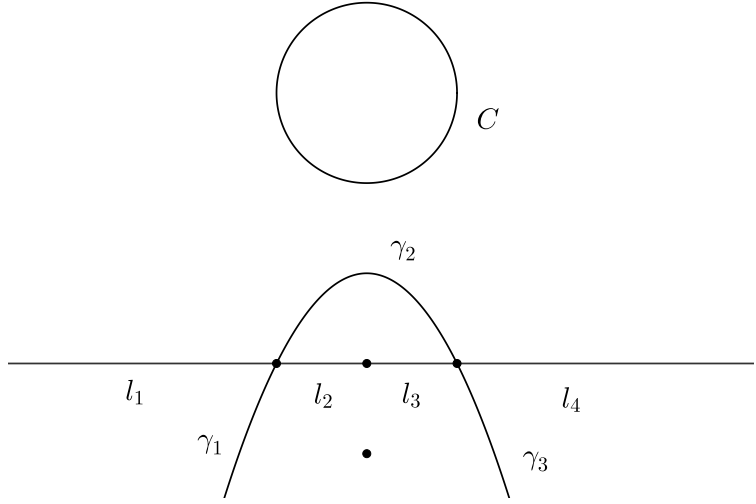}}
\caption{Characteristic set $\Si$ }
\end{figure}
The structure is of type 3 along each segment $l_1,\ l_2,\  l_3,\  l_4$;  of type 7 along the arcs of
parabola $\gamma_1,\ \gamma_2 ,\ \gamma_3$; and of type 5 along the circle $C$. At the isolated point $(0,-1)$,
it is of type 3. At the singular points, it is of type 9 at $(-1,0)$ and $(1,0)$ and of type 5 at
$(0,0)$.
 \end{example}

\section{Normal forms of first integrals}

In this section we derive normal forms for first integrals in neighborhoods of characteristic points
of a hypocomplex structure given by (\ref{VecL}).
Throughout, for $\ep$, $\delta$, and $\rho$ positive numbers, we will denote by
$D_\rho$ the disc with center $0$ and radius $\rho$ and by $R_{\ep,\rho}$ the rectangle in $\R^2$
given by
\[
R_{\ep,\rho}=\{(s,t)\in\R^2:\ -1-\ep<s<1+\ep,\ \ -\delta<t<\delta\}\, .
\]

The following proposition can be derived from the general theory of hypoanalytic structures (see \cite{BCH}
or \cite{Tre})

\begin{proposition}\label{NormalForm1}
Let $p\in\Si$. Then there exists a $C^\varpi$-diffeomorphism
\[
\Phi_p:\, D_\rho\,\longrightarrow\, U^p_\rho=\Phi_p( D_\rho)\subset \mathcal{O};\quad \Phi_p(0)=p
\]
and a function $\psi(s,t)\in C^\varpi(D_\rho; \R)$ satisfying $\psi(0)=0$ and
$\psi(s,t)\ge 0$ for all $(s,t)\in D_0(\rho)$ such that the function
\begeq\label{Normal1}
Z_p(s,t)=s+i\phi(s,t)=s+i\int_0^t\psi(s,\tau)d\tau
\stopeq
satisfies $d\Phi_p^\ast Z\wedge dZ_p =0$ in $D_\rho$. That is $Z_p$ is a first integral
of the vector field $(\Phi_p^{-1})_\ast L$.
\end{proposition}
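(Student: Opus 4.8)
The plan is to carry the global first integral $Z$ into the stated normal form near $p$ in three moves: a holomorphic normalization of $Z$ at $p$, a holomorphic straightening of one coordinate curve, and the extraction of the sign of the transversal derivative from hypocomplexity.

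First I would normalize $Z$ at $p$. Replacing $Z$ by $c\,(Z-Z(p))$ for a suitable unimodular constant $c$ leaves $Z$ a first integral (it is an affine holomorphic function of $Z$) and arranges $Z(p)=0$. Since $p\in\Si$, the defining relation $\mathrm{Im}(Z_x\ov{Z_y})(p)=0$ forces $Z_x(p)$ and $Z_y(p)$ to be $\R$-proportional, so $dZ(p)$ is a complex multiple of a real $1$-form; choosing $\arg c$ makes $dZ(p)$ real. Writing $Z=X+iY$, this yields $dX(p)\ne0$ and $dY(p)=0$. As $dX(p)\ne0$, the function $x:=X=\mathrm{Re}\,Z$ completes to a real-analytic coordinate system $(x,t)$ near $p$, with $p=(0,0)$, in which $Z=x+iY(x,t)$, where $Y$ is real-analytic, $Y(0,0)=0$ and $dY(0,0)=0$.

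Next I would straighten the coordinate curve $\{t=0\}$. Its image under $Z$ is the real-analytic arc $\gamma(x)=x+iY(x,0)$, a graph over the real axis with $\gamma(0)=0$ and $\gamma'(0)=1+iY_x(0,0)=1$. Complexifying the convergent power series of $x\mapsto\gamma(x)$ gives a holomorphic $\Gamma$ with $\Gamma(0)=0$, $\Gamma'(0)=1$, hence a biholomorphism of a neighborhood of $0$; set $G=\Gamma^{-1}$ and $W=G(Z)$, which is again a first integral. On $\{t=0\}$ one has $W=\Gamma^{-1}(\gamma(x))=x\in\R$, so $\mathrm{Im}\,W$ vanishes there, while $dW(p)=G'(0)\,dx=dx$. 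Thus $s:=\mathrm{Re}\,W$ satisfies $ds(p)=dx$, so $(s,t)$ is again a real-analytic coordinate system; let $\Phi_p:D_\rho\to U^p_\rho$ be its inverse. Then $Z_p:=\Phi_p^\ast W=s+i\phi(s,t)$, where $\phi=\mathrm{Im}\,W$ expressed in $(s,t)$ satisfies $\phi(s,0)=0$ and $d\phi(0,0)=0$. Setting $\psi:=\pa_t\phi$ gives $\phi(s,t)=\int_0^t\psi(s,\tau)\,d\tau$ and $\psi(0)=\pa_t\phi(0,0)=0$, as required, with $\psi\in C^\varpi(D_\rho;\R)$ after shrinking $\rho$.

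It then remains to produce the sign $\psi\ge0$ and the wedge identity. Because $G$ is biholomorphic and $Z$ is a homeomorphism onto its image, $W$ is a homeomorphism, so the real-analytic map $F(s,t)=(s,\phi(s,t))$ representing $Z_p$ is a homeomorphism onto an open set; its Jacobian is $\det DF=\pa_t\phi=\psi$. A homeomorphism between planar domains has constant local degree $\pm1$, which at points where it is a local diffeomorphism equals $\mathrm{sgn}\det DF$; hence $\psi$ cannot change sign (note $\psi\not\equiv0$, else $F$ would have rank $\le1$ and could not be open). Reversing the orientation of $t$ if needed gives $\psi\ge0$ on $D_\rho$. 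Finally $Z_p=G(\Phi_p^\ast Z)$ with $G$ holomorphic, so $dZ_p=G'(\Phi_p^\ast Z)\,d\Phi_p^\ast Z$ is proportional to $d\Phi_p^\ast Z$, whence $d\Phi_p^\ast Z\wedge dZ_p=0$; equivalently $Z_p$ is a first integral of $(\Phi_p^{-1})_\ast L$. The main obstacle is precisely this last sign: the normalization and straightening are constructive and routine, but the nonnegativity of $\psi$ on all of $D_\rho$ (not merely along $s=0$) is exactly where the openness of $Z$—the odd finite type, equivalently condition $(\mathcal P)$—enters, and the orientation/Jacobian argument is the clean device that fixes the sign globally from a single choice of orientation of $t$.
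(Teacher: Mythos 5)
Your argument is correct, but note that the paper does not actually prove Proposition~\ref{NormalForm1}: it is stated as a consequence of the general theory of hypoanalytic structures, with a pointer to the monographs of Berhanu--Cordaro--Hounie and Treves. So what you have written is a self-contained substitute rather than a variant of an argument appearing in the text. Your three steps are the standard ones: a unimodular affine renormalization of $Z$ (using $\mathrm{Im}(Z_x\overline{Z_y})(p)=0$ to make $dZ(p)$ real), postcomposition with the inverse of the complexification of the arc $x\mapsto x+iY(x,0)$ to force $\mathrm{Im}\,Z_p=0$ on $\{t=0\}$ so that $\phi=\int_0^t\psi$, and the sign extraction. The last step is the only one that consumes the hypocomplexity hypothesis, and you handle it correctly: since $F(s,t)=(s,\phi(s,t))$ is continuous and injective, its local degree is a single constant $\pm1$ on the connected disc and equals $\mathrm{sgn}\,\psi$ wherever $\psi\neq0$; with continuity and a flip of $t$ this yields $\psi\ge0$ on all of $D_\rho$, the case $\psi\equiv0$ being excluded by openness. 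I would only rephrase ``$\psi$ cannot change sign'' as ``$\mathrm{sgn}\,\psi$ takes a single value on the whole open set $\{\psi\neq0\}$, not merely on each of its components,'' which is exactly what constancy of the degree gives; an even more elementary route to the same point is that injectivity of $F$ forces $t\mapsto\phi(s,t)$ to be strictly monotone for each fixed $s$, with a direction that cannot jump by the intermediate value theorem. What the paper's citation buys instead is the additional local structure recorded in Remark~\ref{TypeOrder} (the factorization $\phi=t^r\phi_1$ and the Weierstrass preparation), which your construction does not reprove but which the proposition as stated does not require.
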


\begin{remark}\label{TypeOrder}
The type of $L$ at the point $p$ can be computed through the normal form $Z_p$ and it is the order
at $t=0$ the function $\phi(0,t)$. If $p\in\Si^0$ (isolated point in $\Si$),
then $Z_p$ is a diffeomorphism in the
deleted disc $D_\rho\backslash 0$. This implies that $\psi(s,t)>0$ for $(s,t)\ne 0$ and that
$\phi$ can be written as
$\phi(s,t)=t\phi_1(s,t)$ with $\phi_1(s,t)>0$ for $(s,t)\ne 0$.

For $p\in S$,  it follows from $\psi\ge 0$ that there exists $r \ge 1$ (odd) such that
$\phi (s,t)=t^r\phi_1(s,t)$ with $\phi_1>0$ in $D_\rho\backslash 0$.
By the Weierstrass Preparation Theorem, we can write $\phi_1$ in a neighborhood of 0, as
\[
\phi_1(s,t)=M(s,t)\left(t^{2l}+\sum_{j=1}^{2l}A_j(s)t^{2l-j}\right)
\]
where $M$ and the $A_j$'s are germs of analytic functions, such that $M$ is a unit ($M(0)\ne 0$),
$A_j(0)=0$ for all $j=1,\cdots 2l$ and the order of $A_{2l}$ is even. The type of $L$ at $p$
is $k=r+2l$.
\end{remark}

Now we construct a normal form along characteristic curves of $\Si^1_R$.

\begin{proposition}\label{NormalForm2}
Let $\Gamma$ be a connected component of $\Si^1_R$ and let $\Gamma_{p,q}\subset\Gamma$ be
an arc connecting two distinct points $p,q\in\Gamma$. Let $k-1$ be the type of
$L$ along $\Gamma$.Then there exists  a $C^\varpi$-diffeomorphism
\[
\Phi_{p,q}:\, R_{\ep,\delta}\,\longrightarrow\, U^{p,q}_{\ep,\delta}=
\Phi_{p,q}( R_{\ep,\delta})\subset \mathcal{O};\quad \Phi_{p,q}(-1,0)=p,\ \ \Phi_{p,q}(1,0)=q
\]
such that the function
\begeq\label{Normal2}
Z_{p,q}(s,t)=s+it^k
\stopeq
satisfies $d\Phi_{p,q}^\ast Z\wedge dZ_{p,q} =0$ in $R_{\ep,\delta}$. That is $Z_{p,q}$ is a first integral
of the vector field $(\Phi_{p,q}^{-1})_\ast L$.
\end{proposition}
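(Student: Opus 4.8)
The plan is to globalize the pointwise normal form of Proposition \ref{NormalForm1} along the arc $\Gamma_{p,q}$ and then to replace the $s$-dependent phase by a pure monomial in a transverse variable. First I would fix a point $p'\in\Gamma_{p,q}$ and apply Proposition \ref{NormalForm1} to obtain coordinates in which $\Si$ is straightened to $\{t=0\}$ and $Z=s+i\phi(s,t)$ with $\phi(s,t)=\int_0^t\psi(s,\tau)\,d\tau$, $\psi\ge 0$. Because $L$ has constant type along $\Gamma$ (the constancy asserted after \eqref{Sigma}), $\psi(s,\cdot)$ vanishes at $t=0$ to a constant order; since $\psi\ge 0$ while $\psi>0$ off $\{t=0\}$ (openness of $Z$ on the regular curve), that order is an even integer $k-1$, so $\psi(s,t)=t^{k-1}b(s,t)$ with $b$ real-analytic and $b(s,0)>0$. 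Integrating gives $\phi(s,t)=t^k h(s,t)$ with $h$ real-analytic and $h(s,0)=b(s,0)/k>0$.

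The local reduction to $s+it^k$ is then immediate: since $h>0$, the map $(s,t)\mapsto(s,\,t\,h(s,t)^{1/k})$ is a $C^\varpi$-diffeomorphism near $\{t=0\}$, and in the new variable $\tau=t\,h^{1/k}$ one has $\tau^k=\phi$, hence $Z=s+i\tau^k$. Equivalently, and more usefully for globalization, $s=\mathrm{Re}(Z)$ while $\tau$ is the real $k$-th root of $\mathrm{Im}(Z)$. Note that $Z$ is real on $\Gamma$ (there $\phi\equiv 0$), and $dZ\ne 0$ forces $d\,\mathrm{Re}(Z)=ds\ne 0$ along $\Gamma$; since $Z$ is globally injective, $\mathrm{Re}(Z)$ restricts to a strictly monotone coordinate on the whole arc.

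The heart of the matter, and the step I expect to be the main obstacle, is patching these local reductions into a single chart over the closed arc $\Gamma_{p,q}$. I would cover the compact arc $\Gamma_{p,q}$ by finitely many of the above patches. On an overlap two such coordinate systems satisfy $s'+i(t')^k=s+it^k$, whence $s'=s$ and $(t')^k=t^k$; as $k$ is odd this forces $t'=t$, so the charts agree and glue to a single pair $\big(\mathrm{Re}(Z),\Theta\big)$, where $\Theta$ is the globally single-valued real $k$-th root of $\mathrm{Im}(Z)$. The constancy of the type is exactly what guarantees that $h>0$ throughout a full neighborhood of the compact arc, so that $\mathrm{Im}(Z)=\Theta^k$ with $\Theta$ real-analytic even across $\Gamma$ and $d\Theta$ transverse to $\Gamma$; this is what rules out any sign change or monodromy in the root as one travels from $p$ to $q$.

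It remains to assemble the chart. The map $\Psi=\big(\mathrm{Re}(Z),\Theta\big)$ is a $C^\varpi$-diffeomorphism from a neighborhood of $\Gamma_{p,q}$ onto a rectangle, carrying $\Gamma_{p,q}$ onto a segment of the $s$-axis. Composing $Z$ beforehand with a real affine map so that the endpoints satisfy $Z(p)=-1$ and $Z(q)=1$, and rescaling $t$ to absorb the resulting positive constant, normalizes the image to $R_{\ep,\delta}$. Setting $\Phi_{p,q}=\Psi^{-1}$ then yields $Z\circ\Phi_{p,q}(s,t)=s+it^k$, i.e. $Z_{p,q}(s,t)=s+it^k$ is a first integral of $(\Phi_{p,q}^{-1})_\ast L$, as claimed. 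The only genuine difficulty is the global coherence of the transverse root $\Theta$; every other ingredient is the local computation above together with the compactness of the arc and the oddness of $k$.
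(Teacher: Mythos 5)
There is a genuine gap, and it sits exactly where you locate ``the heart of the matter.'' Your argument rests on the premise that the global first integral $Z$ itself takes the form $s+i\phi(s,t)$ in the local coordinates of Proposition \ref{NormalForm1}, so that $Z$ is real-valued on $\Gamma$ and $\mathrm{Re}(Z)$, together with a real $k$-th root of $\mathrm{Im}(Z)$, furnishes coordinates along the whole arc. But Proposition \ref{NormalForm1} does not say this: it produces a \emph{local} first integral $Z_{p'}=s+i\phi$ satisfying $d\Phi_{p'}^\ast Z\wedge dZ_{p'}=0$, which (by hypocomplexity) only means $\Phi_{p'}^\ast Z=H_{p'}\circ Z_{p'}$ for some local biholomorphism $H_{p'}$. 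In general $Z(\Gamma)$ is an embedded real-analytic curve in $\C$, not a subset of $\R$ --- the closed-curve case of Proposition \ref{NormalForm3}, where $Z(\Gamma)$ is a closed curve, already shows that $\mathrm{Re}(Z)$ need not be monotone, or $\mathrm{Im}(Z)$ zero, along $\Gamma$. Consequently the overlap identity $s'+i(t')^k=s+it^k$ that drives your gluing is unjustified: two of your local normal forms are related by $s'+i(t')^k=F(s+it^k)$ for a transition biholomorphism $F$ preserving a real segment, and these transitions need not be the identity. Arranging for all of them to be trivial simultaneously is equivalent to finding a single holomorphic map, defined on a neighborhood of the entire image arc $Z(\Gamma_{p,q})$, carrying it onto a real interval --- and that is precisely the step your proof omits.

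The paper supplies exactly this missing ingredient. It first observes that $Z|_\Gamma$ is a diffeomorphism onto a real-analytic arc $\Gamma'_{p,q}\subset\C$, constructs a conformal map $H^+$ on one side of $\Gamma'_{p,q}$ with $H^+(\Gamma')\subset\R$, $H^+(Z(p))=-1$, $H^+(Z(q))=1$, and extends it across $\Gamma'$ by the Schwarz reflection principle (using real analyticity of the curve) to a biholomorphism $H$ of a full neighborhood with $H'\ne 0$. With $H\circ Z$ in hand, a single tubular-neighborhood parametrization $\gamma(\sigma)+\tau\eta(\sigma)$ of the arc, followed by the change of variables $s=\mathrm{Re}\bigl(H\circ Z\circ\Phi_{p,q}\bigr)$ and $t=\tau\sqrt[k]{\beta_1}$, finishes the proof with no patching at all. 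Your local computation --- the factorization $\phi=t^k h$ with $h>0$ and the substitution $\tau=t\,h^{1/k}$ --- is correct and mirrors the final step of the paper's proof; the unproved global statement is the straightening of $Z(\Gamma_{p,q})$ in the \emph{target} plane, not the coherence of the transverse root in the source.
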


\begin{proof}
First note that since $L$ is not a (complex) multiple of a  tangent vector at any point
on $\Gamma$ (otherwise that point of tangency would be a point of infinite type), and since
$dZ\ne 0$ everywhere, then the restriction of $Z$ to $\Gamma$ is a $C^\varpi$-differomorphism.
Hence $\Gamma'=Z(\Gamma) $ is a 1-dimensional submanifold in $\C$. Note also that
$Z(\Si)$ is a subanalytic set in $\C$.
Let $\Gamma_{p,q}'=Z(\Gamma_{p,q}) $ and let $U$ be an open neighborhood of $\Gamma_{p,q}'$
such that $U\backslash\Gamma'$ consists of two simply connected components $U^+$ and $U^-$
sharing a common boundary along $\Gamma'$. Let
$H^+:\ov{U^+}\longrightarrow H^+(\ov{U^+})\subset\C$ be a conformal map such that
\[
H^+(\Gamma'\cap \ov{U^+})\,\subset\, \R+i0,\quad H^+(Z(p))=-1,\ \ \textrm{and}\ \
H^+(Z(q))=1\, .
\]
Since $\Gamma'$ is real analytic, it follows from the Schwarz reflection principle that
$H^+$ can be extended  across $\Gamma'$ and into $U^-$ as a holomorphic $H$ function
defined in a full neighborhood $V$ of $\Gamma_{p,q}'$ and $H'(z)\ne 0$ for every $z\in V$.

Let $\gamma :\R\, \longrightarrow\, \Gamma$ a $C^\varpi$-parametrization such that
$\gamma(-1)=p$, $\ \gamma(1)=q$ and $\gamma([-1,\ 1])=\Gamma_{p,q}$. Let
$\eta :[-1-\ep,\ 1+\ep]\,\longrightarrow\, \R^2$ be $C^\varpi$ and such that
$\eta(\sigma)$ is a unit normal vector to $\Gamma$ at the point $\gamma(\sigma)$.
Define $\Phi_{p,q}$ on $R_{\ep,\delta}$ as
\[
\Phi_{p,q}(\sigma,\tau)=\gamma(\sigma)+\tau\eta(\sigma)\, .
\]
Then for $\ep,\ \delta$ small enough $\Phi_{p,q}$ is a diffeomorphism onto
$ U^{p,q}_{\ep,\delta}=\Phi_{p,q}( R_{\ep,\delta})$ and
\[
\Phi_{p,q}\left((-1-\ep,\ 1+\ep)\times\{0\}\right)=U^{p,q}_{\ep,\delta}\cap\Si \,\subset\, \Gamma\, .
\]
That is, the characteristic set of $(\Phi_{p,q}^{-1})_\ast L$ in $R_{\ep,\delta}$ consists of
the $\sigma$-axis and each point has finite type $k-1$.
The function
\[Z_{p,q}(\sigma,\tau)=H\circ Z\circ\Phi_{p,q}(\sigma,\tau)
\]
 is a first integral of $(\Phi_{p,q}^{-1})_\ast L$.
 We have $Z_{p,q}(\sigma ,0)\in\R$ and $\dis\dd{Z_{p,q}}{\sigma}(\sigma,0)\ne 0$
  for every $\sigma\in (-1-\ep,\ \ 1+\ep)$. We can therefore (after shrinking $\ep$ and $\delta$
  if necessary), take $s=\textrm{Re}( Z_{p,q}(\sigma ,\tau)$  as a new variable. With respect to the variables
  $(s,\tau)$ the first integral becomes $Z_{p,q}(s,\tau)=s+i\beta(s,\tau)$. Since the characteristic set $\tau=0$
  has type $k-1$, then $\beta$ has the form $\beta(s,\tau)=\tau^k\beta_1(s,\tau)$ with $\beta_1(s,0)\ne 0$ for all $s$.
  Finally the with respect to the coordinates $(s,t)$ with $\dis t=\tau\, \sqrt[k]{\beta_1(s,\tau)} $,
  the first integral $Z_{p,q}$ has the desired form (\ref{Normal2})
\end{proof}

By using similar arguments as those used in the previous proposition, we can prove the following.

\begin{proposition}\label{NormalForm3}
Let $\Gamma\subset\Si^1_R$ be a closed curve. Assume that $L$ is of type $k-1$ along $\Gamma$.
Then there exists  a $C^\varpi$-diffeomorphism
\[
\Phi_\Gamma:\, \ci\times (-\delta,\ \delta)\,\longrightarrow\, U^\Gamma_{\delta}=
\Phi_\Gamma(\ci\times (-\delta,\ \delta))\subset \mathcal{O};\quad \Phi_\Gamma\left(\ci\times\{0\}\right)=\Gamma
\]
such that the function
\begeq\label{Normal3}
Z_\Gamma =\ei{t^k+i\ta}\,
\stopeq
where $\ta$ is the angular variable on $\ci$,
satisfies $d\Phi_\Gamma^\ast Z\wedge dZ_\Gamma =0$ in $\ci\times (-\delta,\ \delta)$.
That is $Z_\Gamma$ is a first integral
of the vector field $(\Phi_\Gamma^{-1})_\ast L$.
\end{proposition}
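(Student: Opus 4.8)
The plan is to follow the strategy of Proposition \ref{NormalForm2}, replacing the straightening of a characteristic arc to a segment of the real axis by the straightening of the closed characteristic curve $\Gamma$ to the unit circle, and then carrying out the same logarithmic normalization adapted to the periodic angular variable. First I would record that $Z$ restricts to a real-analytic embedding of $\Gamma$ into $\C$: since $L$ is nowhere tangent to $\Gamma$ (a point of tangency would be of infinite type) and $dZ\ne 0$, the restriction $Z|_\Gamma$ is an immersion, and it is injective because $Z$ is globally injective on $\mathcal{O}$. Hence $\Gamma'=Z(\Gamma)$ is a simple closed real-analytic curve, bounding a Jordan domain $D$. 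By the Riemann mapping theorem there is a conformal map $D\to\mathbb{D}$ onto the unit disc, and since $\Gamma'$ is real analytic this map extends, by the Schwarz reflection principle, to a holomorphic map $H$ on a full neighborhood $V$ of $\Gamma'$ with $H(\Gamma')=\ci$ and $H'(z)\ne 0$ on $V$. This is the analogue of the conformal map used in Proposition \ref{NormalForm2}.

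Next I would build the tubular neighborhood exactly as before: choose a real-analytic parametrization $\gamma:\ci\to\Gamma$ and a real-analytic unit normal field $\eta$ along $\Gamma$ (which exists globally because the normal bundle of a closed curve in $\R^2$ is trivial), and set $\Phi_\Gamma(\ta,\tau)=\gamma(\ta)+\tau\eta(\ta)$. For $\delta$ small this is a diffeomorphism of $\ci\times(-\delta,\delta)$ onto a neighborhood $U^\Gamma_\delta$ whose characteristic set is $\{\tau=0\}$, with constant type $k-1$ there. Then $W:=H\circ Z\circ\Phi_\Gamma$ is a first integral of $(\Phi_\Gamma^{-1})_\ast L$, and $|W|=1$ on $\{\tau=0\}$, where $\ta\mapsto W(\ta,0)$ is a real-analytic diffeomorphism of $\ci$ onto $\ci$. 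After possibly reversing the orientation of $\gamma$ and the sign of $\tau$, I may assume this diffeomorphism has degree $+1$ and that $|W|>1$ for $\tau>0$; reparametrizing the angle by $\ta':=\arg W(\ta,0)$ I obtain $W(\ta',0)=\ei{i\ta'}$.

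The heart of the argument is then the same logarithmic normalization as in Proposition \ref{NormalForm2}, now respecting periodicity. Because $W$ and $\ei{i\ta'}$ both wind once around $0$ as $\ta'$ traverses $\ci$, the quotient $W\ei{-i\ta'}$ has winding number $0$, so for $\tau$ small its logarithm is single-valued; set $g(\ta',\tau)=\log\!\big(W\ei{-i\ta'}\big)=a(\ta',\tau)+i\,b(\ta',\tau)$ with $a,b$ real analytic and $a(\ta',0)=b(\ta',0)=0$, giving $W=\ei{a+i(\ta'+b)}$. Here $a=\log|W|$ is the quantity that vanishes on the characteristic set, playing the role of the imaginary part $\beta$ in Proposition \ref{NormalForm2}; the constant type $k-1$ along $\{\tau=0\}$ forces $a(\ta',\tau)=\tau^k a_1(\ta',\tau)$ with $a_1(\ta',0)\ne 0$, and we may take $a_1>0$ by the orientation chosen above. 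Finally I introduce the new coordinate $t=\tau\sqrt[k]{a_1(\ta',\tau)}$ (a genuine real-analytic change since $a_1>0$), so that $t^k=a$, together with the new angular variable $\ta:=\ta'+b(\ta',\tau)$; in these coordinates $W=\ei{t^k+i\ta}$, which is the desired form \eqref{Normal3}.

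The step I expect to be the main obstacle is the global bookkeeping that distinguishes this proposition from Proposition \ref{NormalForm2}: verifying that the two angular reparametrizations $\ta\mapsto\ta'$ and $\ta'\mapsto\ta$ are honest orientation-preserving diffeomorphisms of $\ci$ (so that $\Phi_\Gamma$ composed with them remains a diffeomorphism of the whole cylinder for $\delta$ small), and, crucially, that $g=\log\!\big(W\ei{-i\ta'}\big)$ is single-valued around the loop. The latter is precisely the absence of monodromy, and it is guaranteed by the degree-one normalization; once this is in place, all remaining computations are local and identical to those in Proposition \ref{NormalForm2}.
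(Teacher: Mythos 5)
Your proof is correct and follows exactly the route the paper intends: the paper omits the proof of Proposition \ref{NormalForm3}, stating only that it follows ``by similar arguments'' to Proposition \ref{NormalForm2}, and your argument is precisely that adaptation (Riemann map plus Schwarz reflection sending $\Gamma'$ to the unit circle instead of an arc to a real segment, the tubular neighborhood $\gamma(\ta)+\tau\eta(\ta)$, and the final change of variables $t=\tau\sqrt[k]{a_1}$). The extra care you take with the global issues --- injectivity of $Z|_\Gamma$ making $\Gamma'$ a Jordan curve, the degree-one normalization guaranteeing that $\log\bigl(W\ei{-i\ta'}\bigr)$ is single-valued, and the check that the angular reparametrizations are diffeomorphisms of $\ci$ --- is exactly what distinguishes this case from the arc case, and you handle it correctly.
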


We will refer to $\left(U^p_\rho,\Phi_p, Z_p\right)$ ,
$\left(U^{p,q}_{\ep,\delta},\Phi_{p,q}, Z_{p,q}\right)$, and
$\left(U^\Gamma_\delta,\Phi_\Gamma, Z_\Gamma\right)$
given in Propositions \ref{NormalForm1}, \ref{NormalForm2}, and \ref{NormalForm3},
as charts of normal forms for the vector field $L$ or the first integral $Z$.

\section{\loj\  numbers for the vector field $L$}

The \loj\ inequality (see \cite{Bi-Mi}, \cite{Loj}  for example) found its way
as an important tool in many areas including Algebraic Geometry, Differential Equations, and Optimization.
The version of the inequality that we will use is the following. Given an open set $U\subset\R^n$,
and $\R$-valued real analytic functions $f$ and $g$ defined on $U$. If $f^{-1}(0)\subset g^{-1}(0)$, then
for every compact set $K\subset\subset U$, there exist positive constants $C$ and $\mu$ such that
\begeq\label{LojIne}
\abs{f(x)}\ge C\abs{g(x)}^\mu \qquad \forall x\in K\, .
\stopeq
%
%In fact we will use the parametric version of this inequality
%(see \cite{Fek}, Section 4 in \cite{Rab}, Theorem 2.2 in \cite{Tou}):
%Let $U\subset \R^n$, $V\subset \R^m$ be open sets, $F(x,t)$ and $G(x,t)$ be $\R$-valued real analytic
%functions in $U\times V$. Suppose that for every $t\in V$, $\, F^{-1}(0,t)\subset G^{-1}(0,t)$.
%Then for every compact sets $K\subset\subset U$ and $K'\subset\subset V$, there exist positive
%constants $C=C(K,K')$ and $\mu=\mu(K,K')$ such that
%\begeq\label{LojIne}
%\abs{F(x,t)}\ge C\abs{G(x,t)}^\mu\qquad \forall (x,t)\in K\times K'\, .
%\stopeq

We are going to associate a positive number to a hypocomplex structure defined by a real analytic vector field $L$.
This number is defined through the \loj\ exponent appearing in inequality (\ref{LojIne}) and is linked to the type
of the vector field.
 We start with a normal form chart
$\left(U^{p,q}_{\ep,\delta},\Phi_{p,q}, Z_{p,q}\right)$ about the characteristic arc
$\Gamma_{p,q}$ along which $L$ is of type $k$, with normal first integral $Z_{p,q}(s,t)=s+it^k$.
It follows from the inequality $\abs{t^k-b^k}\ge \abs{t-b}^k$ for all $t,b\in\R$, that
\begeq\label{LojIneq1}
\abs{Z_{p,q}(s,t)-Z_{p,q}(a,b)}^2\,\ge\, {(s-a)^2+(t-b)^{2k}}\quad \forall (s,t),\, (a,b)\in R_{\ep,\delta}
\, .
\stopeq
We refer to $k$ as the \loj\ number of $L$ in the chart $\left(U^{p,q}_{\ep,\delta},\Phi_{p,q}, Z_{p,q}\right)$.

For $p\in\Si^0\cup S$, let $\left(U^p_\rho,\Phi_p, Z_p\right)$ be the normal form chart about the point $p$
with $Z_p(s,t)=s+i\phi(s,t)$ and $\phi(s,t)=t^r\phi_1(s,t)$ and $\phi_1(s,t)>0$ for $(s,t)\ne(0,0)$ such that
$r=1$ when $p\in\Si_0$, and $r\ge 1$ (odd number) when $p\in S$ (see Remark 3.1). We can assume without loss of generality
that $\phi$ is defined in an open set containing the closed disc $\ov{D_\rho}$.
Consider the real analytic functions $F$ and $G$ given in $\R^4$  by
\[\left\{\begar{ll}
F\left((s,t),(a,b)\right) & =\dis\frac{(s-a)^2}{2}+\left[\phi(s,t)-\phi(a,b)\right]^2\\ \\
G\left((s,t),(a,b)\right) & =\dis (t-b)^2
\stopar\right.\]
We have $G^{-1}(0)=\{ t=b\}$ and since the function $Z_p$ is a homeomorphism,
then $F^{-1}(0)=\{ s=a,\ t=b\}$. Clearly $F^{-1}(0)\subset G^{-1}(0)$.
Therefore, it follows from (\ref{LojIne}) that there exist $C=C(\rho) >0$  and $\mu=\mu(\rho)>0$ such that
\begeq\label{LojIneq11}
\dis\frac{(s-a)^2}{2}+\left[\phi(s,t)-\phi(a,b)\right]^2\ge C\abs{t-b}^{2\mu}\quad \forall (s,t),\ (a,b)\in \ov{D_\rho(0)}.
\stopeq
Define the \loj\ exponent at the point $p$ as
\begeq\label{LojExp}
\mu_p=\inf\left\{ \mu>0;\ \exists C>0,\,\rho>0,\ \text{(\ref{LojIneq11})\ {holds in}}\ D_\rho^2
\right\}
\stopeq

\begin{lemma}\label{Type=Loj}
Let $k$ and $\mu_p$ be the type and the \loj\ exponent of $L$ at the point $p\in\Si$. Then
$k=\mu_p$.
\end{lemma}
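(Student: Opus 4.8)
The plan is to prove the two inequalities $\mu_p\ge k$ and $\mu_p\le k$ separately, working throughout in the normal form chart about $p$ with $Z_p(s,t)=s+i\phi(s,t)$, $\phi(s,t)=t^r\phi_1(s,t)$. By Remark~\ref{TypeOrder} the type $k$ is the order of vanishing of $t\mapsto\phi(0,t)$ at $t=0$; since $\psi\ge 0$ and $Z_p$ is a homeomorphism, $k$ is odd and $\phi(0,t)=c\,t^{k}\bigl(1+o(1)\bigr)$ with $c>0$ as $t\to 0$.

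First I would establish $\mu_p\ge k$ by testing (\ref{LojIneq11}) along the explicit curve $s=a=0$, $b=0$, $t\to 0^{+}$. There the left-hand side equals $\phi(0,t)^2=c^2t^{2k}\bigl(1+o(1)\bigr)$ while the right-hand side is $C\,t^{2\mu}$, so the inequality can hold near $t=0$ only if $2k\le 2\mu$. Hence every admissible exponent satisfies $\mu\ge k$, and by the definition (\ref{LojExp}) this gives $\mu_p\ge k$.

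The substance is the reverse bound $\mu_p\le k$, i.e.\ that (\ref{LojIneq11}) holds with $\mu=k$. I would first peel off the $s$-dependence. Writing $\phi(s,t)-\phi(a,b)=[\phi(s,t)-\phi(s,b)]+[\phi(s,b)-\phi(a,b)]$ and using the Lipschitz bound $\abs{\phi(s,b)-\phi(a,b)}\le L\abs{s-a}$ with $L=\max_{\ov{D_\rho}}\abs{\phi_s}$, an elementary dichotomy (either the $\phi$-difference or the term $\tfrac{(s-a)^2}{2}$ dominates) produces a constant $c_1>0$ with $\tfrac{(s-a)^2}{2}+[\phi(s,t)-\phi(a,b)]^2\ge c_1\,[\phi(s,t)-\phi(s,b)]^2$ on $\ov{D_\rho}^2$. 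Thus the whole matter reduces to the uniform one-variable \loj\ estimate
\begin{equation*}
\abs{\phi(s,t)-\phi(s,b)}\ge c_2\abs{t-b}^{k}\qquad\text{for all small }s,\,t,\,b. \tag{$\star$}
\end{equation*}
Combined with the reduction above, $(\star)$ yields (\ref{LojIneq11}) with $\mu=k$ and $C=c_1c_2^2$, hence $\mu_p\le k$.

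To prove $(\star)$ I would factor out $t-b$ and study the divided difference $\Psi(s,t,b)=\dis\int_0^1\psi\bigl(s,b+\theta(t-b)\bigr)\,d\theta\ge 0$, which is real analytic and (as $\psi\not\equiv 0$) vanishes only where $t=b$; then $(\star)$ is equivalent to $\Psi(s,t,b)\ge c_2\abs{t-b}^{k-1}$. On the slice $s=0$ this is immediate, since $\Psi(0,t,b)=c\,(t^{k-1}+t^{k-2}b+\cdots+b^{k-1})\bigl(1+o(1)\bigr)$ and, $k-1$ being even, the leading homogeneous form is strictly positive off the origin, hence bounded below by $c'\abs{t-b}^{k-1}$. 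The main obstacle is to promote this to a bound that is \emph{uniform} in $s$: one cannot simply compare $\psi(s,\cdot)$ with $\psi(0,\cdot)$ pointwise, because $\psi(s,\cdot)$ degenerates as $s\to 0$ and need not dominate its value at $s=0$. I would resolve this through the curve selection lemma for the \loj\ exponent, reducing $(\star)$ to the order comparison $\mathrm{ord}_\tau\,\Psi\le (k-1)\,\mathrm{ord}_\tau(t-b)$ along every real analytic arc $\tau\mapsto(s(\tau),t(\tau),b(\tau))$ approaching $\{t=b\}$; this is then read off from the Weierstrass factorization $\phi=t^{r}M\cdot P$ of Remark~\ref{TypeOrder} together with the sign constraint $\psi\ge 0$, the point being that the lowest-order behaviour of $\Psi$ along any such arc is governed by the model $\phi(0,t)\sim c\,t^{k}$. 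This uniform order estimate is where the real work lies, and it is the step I expect to require the most care.
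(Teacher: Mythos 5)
Your argument for $\mu_p\ge k$ is the same as the paper's (restrict (\ref{LojIneq11}) to $s=a=b=0$ and let $t\to 0$). For the reverse inequality you take a genuinely different route: the paper argues by contradiction, extracting sequences $A_n=(a_n,b_n)$, $X_n=(s_n,t_n)$ in $D_{1/n}$ that violate the inequality for an exponent $\mu_0\in(k,\mu_p)$, and then deriving a contradiction from the Taylor expansion of $\phi$ about $A_n$ (Cauchy estimates kill the mixed terms after division by $\beta_n^k$, leaving the pure $t$-derivatives, which cannot all tend to $0$ since $\partial_t^k\phi(0)\ne 0$). You instead aim to prove (\ref{LojIneq11}) directly with $\mu=k$. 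Your reduction is correct: the Lipschitz dichotomy does give $\tfrac{(s-a)^2}{2}+[\phi(s,t)-\phi(a,b)]^2\ge c_1[\phi(s,t)-\phi(s,b)]^2$, so everything rests on the one-variable estimate $(\star)$, which would in fact yield the stronger conclusion that the infimum in (\ref{LojExp}) is attained at $k$ (this is what the paper actually invokes in (\ref{LojIneq2})).

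The gap is that $(\star)$ is not proved. Its entire content is the uniformity in $s$, and for that you offer only the assertion that, along every real-analytic arc approaching $\{t=b\}$, the lowest-order behaviour of $\Psi$ is ``governed by the model $\phi(0,t)\sim ct^k$''; you yourself flag this as where the real work lies. That assertion is the whole lemma in this direction and does not follow from Remark \ref{TypeOrder} without further argument: the Weierstrass factorization controls $\psi(s,\cdot)$ through roots that move with $s$, and one must exclude that $\int_b^t\psi(s,\tau)\,d\tau$ degenerates faster than $(t-b)^k$ when the interval $[b,t]$ sits near a cluster of those roots. The statement is true and the step can be closed: write $\psi=U\,W$ with $U$ a positive unit and $W(s,\cdot)$ a nonnegative monic polynomial of degree $k-1$ in $t$ (Weierstrass preparation applied to $\psi$, whose order in $t$ at $0$ is $k-1$); Chebyshev's theorem gives $\max_{[b,t]}W\ge 2\bigl(\tfrac{t-b}{4}\bigr)^{k-1}$, and equivalence of the $L^1$- and $L^\infty$-norms on the finite-dimensional space of polynomials of degree $\le k-1$ (rescaled to $[0,1]$) gives $\int_b^tW\,d\tau\ge c_k\,(t-b)\max_{[b,t]}W$, whence $(\star)$. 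As written, however, the decisive step is missing, so the proposal does not yet constitute a proof.
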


\begin{proof}
First recall (Remark \ref{TypeOrder}) that the type $k$ of $L$ at the point $p$ is the order at $t=0$ of the function
$\phi(0,t)$.
Suppose $p\in \Si^0\cup S$. By taking $s=a=0$ and $b=0$ in inequality (\ref{LojIneq11}) we get
$\abs{\phi(0,t)}\ge C\abs{t}^\mu$ for $\abs{t}<\rho$. Therefore $\mu\ge k$ and so $\mu_p\ge k$.
To prove equality, suppose, by contradiction that $\mu_p >k$. Let $\mu_0$ such that $k<\mu_0<\mu_p$.
It follows from the definition of $\mu_p$ that there exist sequence of points
$\{ A_n=(a_n,b_n)\}_{n\in\Z^+}$ and $\{ X_n=(s_n,t_n)\}_{n\in\Z^+}$
with
$A_n,\ X_n, \in D_{1/n}$  and
\begeq\label{LojExpoProof}
\frac{(s_n-a_n)^2}{2}+\left[\phi(X_n)-\phi(A_n)\right]^2\, <\, \abs{t_n-b_n}^{2\mu_0}\quad
\forall n\in\Z^+
\stopeq
Set $s_n-a_n=\alpha_n$ and $t_n-b_n=\beta_n$ and expand the function $\phi$ into its Taylor series
about the point $A_n$ as
\[
\phi(X_n)-\phi(A_n)=\sum_{i+j\ge 1,\ i\ge 1}\frac{\pa^{i+j}\phi(A_n)}{\pa s^i\pa t^j}\frac{\alpha_n^i\beta_n^j}{i!\, j!}+
\sum_{j=0}^\infty \frac{\pa^{j}\phi(A_n)}{\pa t^j}\frac{\beta_n^j}{ j!}\, .
\]
Note that it follows from (\ref{LojExpoProof}) and $k<\mu_0$ that $\dis\lim_{n\to\infty}\frac{\alpha_n}{\beta_n^k}=0$.
It also follows from the uniform convergence of Taylor series that
\begeq\label{Limzero}
\lim_{n\to\infty}
\sum_{i+j\ge 1,\ i\ge 1}\frac{\pa^{i+j}\phi(A_n)}{\pa s^i\pa t^j}\frac{\alpha_n^i\beta_n^j}{\beta_n^k i!\, j!}=0.
\stopeq
 To see why (\ref{Limzero}) holds, we can use the Cauchy integral formula to the complexified function
$\hat{\phi}(\hat{s},\hat{t})$ in a closed bidisc $\ov{D(\rho)}^2\subset\C^2$ with $\rho >0$ small to get estimates of the derivatives
\[
\left|\frac{\pa^{i+j}\hat{\phi}(\hat{s},\hat{t})}{\pa \hat{s}^i\pa \hat{t}^j}\right|
\le 4\pi^2M\frac{i! \, j!}{(\rho /2)^i(\rho/2)^j}\quad \forall (\hat{s},\hat{t})\in D(\rho/2)^2\, ,
\]
where $\dis M=\max_{|\hat{s}|=|\hat{t}|=\rho}\abs{\hat{\phi}(\hat{s},\hat{t})}$. In particular, for $n$ large enough, we have
\[
\left|\frac{\pa^{i+j}\phi(A_n)}{\pa s^i\pa t^j}\right|\le 4\pi^2M\frac{i! \, j!}{(\rho /2)^i(\rho/2)^j}\, .
\]
Consequently, for $n$ large enough so that $\alpha_n<\rho/2$ and $\beta_n<\rho/2$, we have
\[
\left|\sum_{i+j\ge 1,\ i\ge 1}\frac{\pa^{i+j}\phi(A_n)}{\pa s^i\pa t^j}\frac{\alpha_n^i\beta_n^j}{\beta_n^k i!\, j!}\right|
\le \frac{4\pi^2 M\rho^2}{(\rho-2\alpha_n)(\rho-2\beta_n)}\, \frac{2\alpha_n}{\beta_n^k}
\]
and the limit in (\ref{Limzero}) follows.
As a consequence, we obtain after dividing (\ref{LojExpoProof}) by $\beta_n^{2k}$ and taking the limit
that
\[
\lim_{n\to\infty}\left(\sum_{j=0}^\infty \frac{\pa^{j}\phi(A_n)}{\pa t^j}\frac{\beta_n^j}{j!\,\beta_n^k }\right) =0.
\]
This is a contradiction since $\dis \frac{\pa^{j}\phi(0)}{\pa t^j}=0$ for $j<k$ and $\dis \frac{\pa^{k}\phi(0)}{\pa t^k}\ne 0$.
\end{proof}
Consequently
\begeq\label{LojIneq2}
\abs{Z_{p}(s,t)-Z_{p}(a,b)}^2\,\ge\, M\left({(s-a)^2+\abs{t-b}^{2\mu_0}}\right)\quad \forall (s,t),\, (a,b)\in \ov{D_\rho}
\, ,
\stopeq
with $M=\dis\min\left(\frac{1}{2},C\right)$.
We refer to $\mu_0=k$ as the \loj\ number of $L$ in the chart $\left(U^p_\rho,\Phi_p, Z_p\right)$.

Let $\Omega$ be open, relatively compact, and $\Om\subset\subset\mathcal{O}$. Then
$\Si_\Om=\Si\cap\ov{\Om}$ can be covered by a finite number of open sets
$U_1,\cdots ,U_N$ such that each $U_j$ is a normal form chart for the vector field $L$.
Let $\mu_j$ be the \loj\ number of $L$ in $U_j$. We define the \loj\ number of
$L$ in $\Om$ as
\begeq\label{Lojnumber}
\mu=\mu(L,\Om)=\max\{\mu_1,\cdots ,\mu_N\}\, .
\stopeq

\begin{remark}
The definition (\ref{Lojnumber}) simply means that the \loj\ number of $L$ in $\ov{\Om}$
is obtained by taking the largest type of $L$ in $\ov{\Om}$, which is a finite number because the
vector field is real analytic and $\Om$ is bounded. This number is also invariant under
change of coordinates and does not depend on the open sets  $U_1,\cdots ,U_N$.
\end{remark}

\section{Estimate for a generalized Cauchy kernel}

We establish an integral estimate that will be crucial in the solvability of the vector field $L$.
We start with a simple lemma.

\begin{lemma}\label{SimpleLemma}
Let $0<\tau<1$ and $1<q<1+\tau$. There exists a positive constant $C=C(\tau,q)$ such that for $\rho >0$ we have
\begeq\label{SimpleEstimate}
I=\int_{D_\rho}\frac{ds\, dt}{\abs{s+i|t|^{1/\tau}}^q}\le C\max(\rho^{1+\tau-q},\rho^{(1+\tau-q)/\tau})
\stopeq
\end{lemma}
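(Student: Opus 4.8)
The plan is to strip the anisotropic kernel down to an isotropic one and then use ordinary polar coordinates. First I would use the identity $\abs{s+i\abs{t}^{1/\tau}}^2=s^2+\abs{t}^{2/\tau}$ to rewrite the integral as $\dis I=\int_{D_\rho}\left(s^2+\abs{t}^{2/\tau}\right)^{-q/2}\,ds\,dt$. Since both $D_\rho$ and the integrand are even in $s$ and in $t$, I would restrict to the first quadrant at the cost of a factor $4$, reducing matters to $\dis\int_{D_\rho\cap\{s,t>0\}}\left(s^2+t^{2/\tau}\right)^{-q/2}\,ds\,dt$. The anisotropy (which pairs $s$ with $\abs{t}^{1/\tau}$) then suggests the substitution $w=t^{1/\tau}$, i.e.\ $t=w^\tau$, with $dt=\tau w^{\tau-1}\,dw$ and $t^{2/\tau}=w^2$. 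This trades the awkward denominator for the isotropic factor $(s^2+w^2)^{q/2}$ at the price of the Jacobian weight $\tau w^{\tau-1}$.

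Next I would pass to polar coordinates $s=r\cos\phi$, $w=r\sin\phi$ with $\phi\in(0,\pi/2)$, under which the integrand separates cleanly as $r^{\tau-q}\sin^{\tau-1}\phi$ times $dr\,d\phi$. The angular factor $\dis\int_0^{\pi/2}\sin^{\tau-1}\phi\,d\phi$ is a finite constant, convergent precisely because $\tau>0$. For the radial factor I would exploit positivity of the integrand: the image of the quarter-disc under $t\mapsto w$ is the region $\{s^2+w^{2\tau}<\rho^2,\ s,w>0\}$, and any such point satisfies $s<\rho$ and $w<\rho^{1/\tau}$, hence $r=\sqrt{s^2+w^2}<R$ with $R=\sqrt{\rho^2+\rho^{2/\tau}}$. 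Enlarging the domain of integration to the quarter-disc $\{r<R\}$ only increases $I$, and since the hypothesis $q<1+\tau$ gives exponent $\tau-q>-1$, the radial integral converges to $\dis\int_0^R r^{\tau-q}\,dr=\frac{R^{1+\tau-q}}{1+\tau-q}$. Collecting constants yields $\dis I\le \frac{4\tau}{1+\tau-q}\left(\int_0^{\pi/2}\sin^{\tau-1}\phi\,d\phi\right)R^{1+\tau-q}$, with a constant depending only on $\tau$ and $q$.

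Finally I would produce the $\max$ in the statement by a case split on the size of $\rho$. When $\rho\le 1$ one has $\rho^{2/\tau}\le\rho^2$ (because $1/\tau>1$), so $R^2\le 2\rho^2$ and $R^{1+\tau-q}\le 2^{(1+\tau-q)/2}\rho^{1+\tau-q}$; when $\rho\ge 1$ one has $\rho^2\le\rho^{2/\tau}$, so $R^2\le 2\rho^{2/\tau}$ and $R^{1+\tau-q}\le 2^{(1+\tau-q)/2}\rho^{(1+\tau-q)/\tau}$. In either case $R^{1+\tau-q}$ is bounded by a fixed constant times $\max\!\left(\rho^{1+\tau-q},\rho^{(1+\tau-q)/\tau}\right)$, which gives the claimed estimate.

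The main subtlety is that $D_\rho$ is a genuine round disc in the $(s,t)$ variables, but after the substitution $w=t^{1/\tau}$ its image is bounded by the anisotropic curve $s^2+w^{2\tau}=\rho^2$, not a circle. The device that makes the computation routine is to use positivity of the integrand to replace this transformed region by the enclosing quarter-disc of radius $R=\sqrt{\rho^2+\rho^{2/\tau}}$, and then to track the two competing scales $\rho$ and $\rho^{1/\tau}$ separately; the case split on whether $\rho\lessgtr 1$ is exactly the source of the two exponents appearing in the $\max$. The two convergence conditions, $\tau>0$ for the angular integral and $q<1+\tau$ for the radial integral, are precisely the hypotheses of the lemma, so no further restriction is needed.
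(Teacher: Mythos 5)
Your argument is correct and follows essentially the same route as the paper: the substitution $w=|t|^{1/\tau}$ (the paper uses the signed version $\eta=\frac{t}{|t|}|t|^{1/\tau}$ over the full disc rather than restricting to a quadrant), passage to polar coordinates separating the angular factor $\int|\sin\phi|^{\tau-1}d\phi$ from the radial factor $\int_0^{R}r^{\tau-q}dr$, and enlargement of the transformed region to a disc of radius comparable to $\max(\rho,\rho^{1/\tau})$. The only cosmetic difference is that the paper takes $\rho'=\max(\rho,\rho^{1/\tau})$ directly while you use $R=\sqrt{\rho^2+\rho^{2/\tau}}$ and then split on $\rho\lessgtr 1$; these agree up to a factor of $\sqrt{2}$.
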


\begin{proof}
For $t\ne0$ consider the new variable $\eta =\dis\frac{t}{|t|}\abs{t}^{1/\tau}$ so that
$dt=\tau \abs{\eta}^{\tau-1}$ and for $\rho'= \max(\rho,\rho^{1/\tau})$ the integral can be estimated
with the use of polar coordinates as
\[
I\le \int_{D_{\rho'}}\frac{\tau ds\, d\eta}{\abs{\eta}^{1-\eta}\,\abs{s+i\eta}^q}
=\left(\int_0^{2\pi}\frac{\tau d\ta}{\abs{\sin\ta}^{1-\tau}}\right)\int_0^{\rho'}r^{\tau-q}dr
\]
and (\ref{SimpleEstimate}) follows.
\end{proof}

Let $\mathcal{O}\subset\R^2$ be open and connected and $Z:\mathcal{O}\,\longrightarrow\, Z(\mathcal{O})\subset\C$
be a real analytic homeomorphism and let $L=\dis Z_y\pa_x-Z_x\pa_y$.
For $(x,y)$ and $(\xi,\eta)$ distinct points in $\mathcal{O}$, consider the function $\ke$ given
by
\begeq\label{kernel}
\ke =\frac{1}{Z(\xi,\eta)-Z(x,y)}\, .
\stopeq
We have the following proposition
\begin{proposition}\label{KEstimate}
Let $\Om\subset\mathcal{O}$ be open and relatively compact and let
$\mu=\mu(L,\Om)$ be the \loj\ number of $L$ on $\Om$.
Then for $\dis 1<q<1+\frac{1}{\mu}$, we have $\mathcal{K}_{(x,y)}\in L^q(\Om)$.
Moreover, there exists a positive constant $C=C(\Om, q,\mu)$ such that
\begeq\label{KEstimate-q}
\norm{\mathcal{K}_{(x,y)}}_{L^q(\Om)}\le C\qquad \forall (x,y)\in \mathcal{O}\, .
\stopeq
\end{proposition}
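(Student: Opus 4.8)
The plan is to bound the $L^q$ norm of $\mathcal{K}_{(x,y)}$ by reducing everything to the local normal forms from Section 3 and then invoking the elementary integral estimate of Lemma \ref{SimpleLemma}. The key observation is that the kernel $\mathcal{K}_{(x,y)}=1/(Z(\xi,\eta)-Z(x,y))$ can only fail to be $L^q$-integrable near the diagonal $(\xi,\eta)=(x,y)$, so the whole problem is local in nature and the constant should depend only on $\Om$, $q$, and $\mu$.

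First I would cover $\ov{\Om}$ by finitely many charts: the normal form charts $U_1,\dots,U_N$ about $\Si_\Om=\Si\cap\ov\Om$ (whose existence and finiteness give the number $\mu=\max_j\mu_j$), together with finitely many charts on which $L$ is elliptic, i.e. where $Z$ is a genuine diffeomorphism onto its image and $\mathcal{K}_{(x,y)}$ behaves like the standard Cauchy kernel $1/(w-w_0)$. Using a partition-of-unity/finite-cover argument, it suffices to bound each piece $\int_{U_j}\abs{\mathcal{K}_{(x,y)}}^q$ uniformly in $(x,y)$, since the total integral is a finite sum of such pieces plus a contribution from the region where $\abs{Z(\xi,\eta)-Z(x,y)}$ is bounded below (which is trivially controlled). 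On the elliptic charts the estimate is classical: a change of variables $w=Z(\xi,\eta)$ with bounded Jacobian reduces to $\int\abs{w-w_0}^{-q}\,dA(w)$ over a bounded region, finite precisely because $q<2$, and a fortiori for $q<1+1/\mu\le 2$.

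The heart of the matter is the characteristic charts. Fix a chart $U_j$ with normal form coordinates $(s,t)$ and first integral $Z_j(s,t)=s+it^k$ (arc case) or $Z_j(s,t)=s+i\phi(s,t)$ with the \loj\ inequality (\ref{LojIneq2}) available. Pulling the integral back through the diffeomorphism $\Phi_j$ costs only a bounded Jacobian factor, so I would estimate
\[
\int_{U_j}\frac{ds\,dt}{\abs{Z_j(s,t)-Z_j(a,b)}^q}
\]
where $(a,b)$ is the image of $(x,y)$. Applying the \loj\ lower bound (\ref{LojIneq2}), namely $\abs{Z_j(s,t)-Z_j(a,b)}^2\ge M\bigl((s-a)^2+\abs{t-b}^{2\mu_j}\bigr)$, and translating so that $(a,b)$ becomes the origin, this is dominated by
\[
M^{-q/2}\int_{D_\rho}\frac{ds\,dt}{\bigl((s')^2+\abs{t'}^{2\mu_j}\bigr)^{q/2}}
=M^{-q/2}\int_{D_\rho}\frac{ds'\,dt'}{\abs{s'+i\abs{t'}^{\mu_j}}^{q}}.
\]
Setting $\tau=1/\mu_j$, this is exactly the integral $I$ of Lemma \ref{SimpleLemma}, which is finite and bounded by $C\max(\rho^{1+\tau-q},\rho^{(1+\tau-q)/\tau})$ provided $1<q<1+\tau=1+1/\mu_j$. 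Since $\mu_j\le\mu$ gives $1+1/\mu\le 1+1/\mu_j$, the hypothesis $q<1+1/\mu$ guarantees convergence in every chart simultaneously, and the resulting bound is independent of the base point $(a,b)$, hence of $(x,y)$.

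The \textbf{main obstacle} I anticipate is the uniformity of the constant as $(x,y)$ ranges over all of $\mathcal{O}$ while $(\xi,\eta)$ ranges over the fixed relatively compact set $\Om$. When $(x,y)$ is far from $\ov\Om$ the denominator is bounded below and the estimate is easy; the delicate regime is when $(x,y)$ sits in or near a characteristic set, where the translated singularity $(a,b)$ may lie at the boundary of, or even outside, the normal-form chart. I would handle this by noting that Lemma \ref{SimpleLemma} already allows the singularity to be an arbitrary point (the integral $I$ there is centered at the origin but the bound is uniform after translating the domain, and the domain $D_\rho$ can be enlarged to a fixed disc containing all relevant base points), and that the \loj\ inequality (\ref{LojIneq2}) holds for \emph{all} pairs of points in $\ov{D_\rho}$, not merely for a fixed base point. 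Thus the same chart-by-chart bound applies with $(a,b)$ anywhere in the chart, and for base points outside the chart one simply has a uniform lower bound on the denominator. Collecting the finitely many chart estimates and the elliptic/far-field contributions yields the single constant $C=C(\Om,q,\mu)$ claimed in (\ref{KEstimate-q}).
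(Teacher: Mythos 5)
Your proposal is correct and follows essentially the same route as the paper: a finite cover of $\Si\cap\ov{\Om}$ by normal form charts, the \loj\ inequality to reduce each chart integral to Lemma \ref{SimpleLemma}, a classical Cauchy-kernel bound on the elliptic remainder, and a case split on whether the base point lies in the chart to obtain uniformity in $(x,y)$. The only detail you elide is that passing from $\abs{Z(\xi,\eta)-Z(x,y)}$ to $\abs{Z_j(s,t)-Z_j(a,b)}$ requires not just the bounded Jacobian of $\Phi_j$ but also the conformal factor $H_j$ with $Z=H_j\circ Z_j\circ\Phi_j^{-1}$ and $\inf\abs{H_j'}>0$, which the paper makes explicit in (\ref{ZtoNormal}).
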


The remainder of this section deals with the proof of the proposition. We start by proving (\ref{KEstimate-q})
in the case of a normal form

\begin{lemma}\label{EstimateNormal}
Let $Z_0(s,t)=s+i\phi(s,t)$ with $\phi\in C^\varpi(D_\rho;\R)$, $\, \phi(s,t)=t^r\phi_1(s,t)$ with
$r\in\Z^+$ and $\phi_1(s,t)>0$ for $(s,t)\ne 0$, and such that $Z_0: D_\rho\longrightarrow Z_0(D_\rho)$ is a homeomorphism.
Let $\mu_0$ be the associated \loj\ number of the induced hypocomplex structure and let
$\dis 1<q<1+\frac{1}{\mu_0}$. Then there exists $C=C(\rho, \mu_0,q)$ such that
\begeq\label{Normal-q-estimate1}
I_w=\int_{D_{\rho/2}}\frac{ds\,dt}{\abs{Z_0(s,t)-w}^q}\,\le\, C\qquad \forall w\in\C\, .
\stopeq
\end{lemma}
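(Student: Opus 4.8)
The plan is to handle an arbitrary $w\in\C$ by a nearest-point reduction to the case $w\in Z_0(\ov{D_{\rho/2}})$, and then to combine the \loj\ lower bound (\ref{LojIneq2}) with Lemma \ref{SimpleLemma}. The one point that genuinely requires care is obtaining a bound that is \emph{uniform} in $w$, including for those $w$ lying outside the image of $Z_0$; the nearest-point comparison is the device I expect to carry the whole argument.

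First I would carry out the reduction. Since $\ov{D_{\rho/2}}$ is compact and $Z_0$ is continuous, the map $(s,t)\mapsto\abs{Z_0(s,t)-w}$ attains its minimum on $\ov{D_{\rho/2}}$ at some point $(a_0,b_0)$; put $w_0=Z_0(a_0,b_0)$. By minimality, $\abs{w-w_0}\le\abs{w-Z_0(s,t)}$ for every $(s,t)\in\ov{D_{\rho/2}}$, so the triangle inequality gives
\[
\abs{Z_0(s,t)-w_0}\le\abs{Z_0(s,t)-w}+\abs{w-w_0}\le 2\abs{Z_0(s,t)-w}.
\]
Hence $I_w\le 2^q I_{w_0}$, and it suffices to bound $I_{w_0}$ for $w_0=Z_0(a_0,b_0)$ with $(a_0,b_0)\in\ov{D_{\rho/2}}$. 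This is the crux: it disposes of all $w\in\C$ at once, in particular those for which no $(a,b)\in\ov{D_\rho}$ satisfies $w=Z_0(a,b)$.

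Next I would insert the \loj\ lower bound. Since both $(s,t)$ and $(a_0,b_0)$ lie in $\ov{D_\rho}$, inequality (\ref{LojIneq2}) yields $\abs{Z_0(s,t)-w_0}^2\ge M\left((s-a_0)^2+\abs{t-b_0}^{2\mu_0}\right)$, whence
\[
I_{w_0}\le M^{-q/2}\int_{D_{\rho/2}}\frac{ds\,dt}{\left((s-a_0)^2+\abs{t-b_0}^{2\mu_0}\right)^{q/2}}.
\]
After the translation $s'=s-a_0$, $t'=t-b_0$, the domain is carried into $D_\rho$ (because $(a_0,b_0)\in\ov{D_{\rho/2}}$ forces every translated point to have modulus $<\rho$), and, the integrand being nonnegative, the integral only grows if extended to all of $D_\rho$; moreover $\left((s')^2+\abs{t'}^{2\mu_0}\right)^{q/2}=\abs{s'+i\abs{t'}^{\mu_0}}^{q}$.

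Finally I would apply Lemma \ref{SimpleLemma} with $\tau=1/\mu_0$. When $\mu_0>1$ we have $0<\tau<1$, and the hypothesis $1<q<1+1/\mu_0$ is precisely $1<q<1+\tau$, so the lemma bounds the translated integral by $C\max(\rho^{1+\tau-q},\rho^{(1+\tau-q)/\tau})$, a finite constant depending only on $\rho$, $\mu_0$, and $q$ and, crucially, independent of the translation $(a_0,b_0)$ and hence of $w$. The borderline case $\mu_0=1$ is settled directly: the integrand is then $\left((s-a_0)^2+(t-b_0)^2\right)^{-q/2}$, and passing to polar coordinates shows convergence since $q<2=1+1/\mu_0$. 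Collecting constants gives $I_w\le 2^qM^{-q/2}C$ uniformly in $w$, which is (\ref{Normal-q-estimate1}).
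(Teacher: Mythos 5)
Your proof is correct, and its analytic core --- the \loj\ lower bound (\ref{LojIneq2}) followed by a translation and an application of Lemma \ref{SimpleLemma} with $\tau=1/\mu_0$ --- is exactly the paper's. Where you genuinely differ is in the reduction to the case where $w$ lies in the image of $Z_0$. The paper splits into two cases: for $w\notin Z_0(D_\rho)$ it invokes a lower bound $\abs{Z_0(s,t)-w}\ge\delta$ (which, to be uniform in $w$, implicitly uses that $Z_0(\ov{D_{\rho/2}})$ is a compact subset of the open set $Z_0(D_\rho)$, so that $\delta$ can be taken independent of $w$), and for $w=Z_0(a,b)$ with $(a,b)\in D_\rho$ it applies the \loj\ estimate directly, enlarging the domain of integration to $D((a,b),2\rho)$ before invoking Lemma \ref{SimpleLemma}. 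Your nearest-point doubling trick ($I_w\le 2^q I_{w_0}$ with $w_0$ the closest point of $Z_0(\ov{D_{\rho/2}})$ to $w$) disposes of all $w\in\C$ in a single stroke and makes the uniformity in $w$ completely transparent, at the harmless cost of a factor $2^q$; it also lets you translate into $D_\rho$ rather than $D_{2\rho}$ since $(a_0,b_0)\in\ov{D_{\rho/2}}$. You additionally treat the borderline case $\mu_0=1$ (where Lemma \ref{SimpleLemma} does not literally apply, as it requires $\tau<1$) by a direct polar-coordinate computation, a point the paper passes over. Both arguments are sound; yours is slightly more self-contained on the uniformity issue.
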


\begin{proof}
If $w\notin Z_0(D_\rho)$, then there exists $\delta >0$ such that
\[
\abs{Z_0(s,t)-w}\ge \delta\qquad\forall (s,t)\in D_{\rho/2}\, .
\]
Therefore
\[
I_w\le \frac{\pi\rho^2}{4\delta^q}\, .
\]
For $w\in Z_0(D_\rho)$, let $(a,b)\in D_\rho$ such that $w=Z_0(a,b)$.
In this case the \loj\ inequality implies that
\[
\abs{Z_0(s,t)-w}^2=\abs{Z_0(s,t)-Z_0(a,b)}^2\ge C_\rho \left[(s-a)^2+(t-b)^{2\mu_0}\right]
\]
and, after using $D_{\rho/2}\subset D((a,b), 2\rho)$ and applying Lemma (\ref{SimpleLemma}), we get
\[\begar{ll}
I_w & \dis\le  \frac{1}{C_\rho}\int_{D_{\rho/2}}\frac{ds\, dt}{\left[(s-a)^2+(t-b)^{2\mu_0}\right]^{q/2}}\\ \\
& \dis\le \frac{1}{C_\rho}\int_{D((a,b),2\rho)}\frac{ds\, dt}{\left[(s-a)^2+(t-b)^{2\mu_0}\right]^{q/2}}\\ \\
&\dis\le \frac{1}{C_\rho}\int_{D_{2\rho}}\frac{ds\, dt}{\abs{s+i|t|^{\mu_0}}^{q}}\le C(\rho,\mu_0,q)
\stopar
\]
\end{proof}

Similar arguments show that in a normal form chart
 $\left(U^{p,q}_{\ep,\delta},\Phi_{p,q}, Z_{p,q}\right)$ about the characteristic arc
$\Gamma_{p,q}$ along which $L$ is of type $k-1$, with normal first integral $Z_{p,q}(s,t)=s+it^k$,
we have
\begeq\label{Normal-q-estimate2}
I_w=\int_{R_{\ep/2,\delta/2}}\frac{ds\,dt}{\abs{Z_{p,q}(s,t)-w}^q}\,\le\, C\qquad \forall w\in\C\, .
\stopeq

Now we continue with the proof of the proposition.
Let $\Si_\Om=\Si\cap\ov{\Om}$. Then $\Si_\Om$ can be covered by a finite number of normal form charts
$U_1,\,\cdots\, ,U_N$. Hence for $j=1,\cdots ,N$, there exists a diffeomorphism
$\Phi_j: V_j\, \longrightarrow\, U_j$ with $V_j=D_\rho$ or $V_j=R_{\ep,\delta}$. Denote by
$Z_j$ the normal form of the first integral in $V_j$.
Let $V_j'=D_{\rho/2}$ or $V_j'=R_{\ep/2,\delta/2}$, respectively, and let $U'_j=\Phi_j(V'_j)$.
We can assume that $\dis\Si_\Om\subset \bigcup_{j=1}^NU'_j$.
After shrinking $\rho$, $\ep$ and $\delta$, if necessary, we can find conformal maps $H_j$ defined on
open neighborhoods of $\ov{Z_j(V_j)}$ such that
\begeq\label{ZtoNormal}
Z(x,y)=H_j\left(Z_j\left(\Phi_j^{-1}(x,y)\right)\right)\ \ \text{in}\ U_j,\qquad j=1,\cdots ,N\, .
\stopeq
For $j=1,\cdots ,N$, consider the positive constants
\[
c_j=\inf_{z\in Z_j(V_j)}\abs{H_j'(z)}\quad\text{and}\quad M_j=\sup_{(s,t)\in V_j}\abs{D\Phi_j(s,t)}
\]
and, for $(x,y)\in \mathcal{O}$, the integral
\begeq\label{IntegralInUj}
I_j=\int_{U'_j}\abs{\ke}^q d\xi\,d\eta
\stopeq
If $(x,y)\notin U_j$, then there exists $\delta_j >0$ such that $\abs{\ke}\ge \delta$,
$\forall (\xi,\eta)\in U'_j$ and in this case
\begeq\label{EstimateIj1}
I_j\le \frac{\text{Area}(U'_j)}{\delta_j^q}\qquad\forall (x,y)\in \mathcal{O}\backslash U_j\, .
\stopeq
If $(x,y)\in U_j$, then there exist $(a,b)\in V_j$ such that
$(x,y)=\Phi_j(a,b)$. Also  $(\xi,\eta)\in U_j$ can be written as $(\xi,\eta)=\Phi_j(s,t)$ with
$(s,t)\in V_j$. Hence,
\[
\abs{\ke} =\frac{1}{\abs{H_j\left(Z_j(s,t)\right)-H_j\left(Z_j(a,b)\right)}}
\le \,\frac{1}{c_j\, \abs{Z_j(s,t)-Z_j(a,b)}}\, .
\]
It follows from this inequality, (\ref{Normal-q-estimate1}) and (\ref{Normal-q-estimate2})that
\begeq\label{EstimateIj2}\begar{ll}
I_j & \dis \le\frac{1}{c_j^q}\int_{V_j'}\frac{\abs{D(\Phi_j(s,t))}\, ds\, dt}{\abs{Z_j(s,t)-Z_j(a,b)}^q} \\ \\
& \dis\le \frac{M_j}{c_j^q}\int_{V_j'}\frac{ ds\, dt}{\abs{Z_j(s,t)-Z_j(a,b)}^q}\le C_j\qquad
\forall (x,y)\in U_j
\stopar\stopeq
for some constant $C_j$ depending on $U_j$, $q$, and the \loj\ number $\mu_j$ in $U_j$.
It follows then from (\ref{EstimateIj1}) and (\ref{EstimateIj2}) that for every $j=1,\cdots N$ there
exists $B_j=B_j(U_j,q,\mu_j) $ such that
\begeq\label{EstimateIJ}
I_j\le B_j\qquad \forall (x,y)\in\mathcal{O}\, .
\stopeq
Let $\dis \Om_0=\Om\backslash \left(\bigcup_{j=1}^NU_j'\right)$.  The structure induced by $Z$ is
elliptic in an open neighborhood of $\Om_0$ and therefore $\ke$ is conjugate to the classical
Cauchy kernel $\dis\frac{1}{\zeta -z}$ and it follows that there exists a positive constant
$B_0=B_0(\Om_0,q)$ such that
\begeq\label{EstimateI0}
I_0=\int_{\Om_0}\abs{\ke}^q\, d\xi\,d\eta \,\le\, B_0 \qquad\forall (x,y)\in\mathcal{O}.
\stopeq
Finally, for $\dis 1<q<1+\frac{1}{\mu}\le 1+\frac{1}{\mu_j}$, $j=1,\cdots, N$ we have
\[
\norm{\mathcal{K}_{(x,y)}}_{L^q(\Om)}\le \sum_{j=0}^NI_j\, \le\, \sum_{j=0}^NB_j\qquad
\forall (x,y)\in \mathcal{O} \ .
\]

\section{Generalized Cauchy  Operator}

A generalization of the Cauchy integral operator for  vector fields in two variables
appeared in papers \cite{Mez3}, \cite{Mez4}, \cite{Mez5} and then in \cite{CDM2} and \cite{CaMez}.
Here we use the generalized Cauchy operator for real analytic hypocomplex structures
given by a vector filed (\ref{VecL}) with a global first integral $Z$
on the open set $\mathcal{O}\subset\R^2$.
For $\Om\subset\mathcal{O}$ open and relatively compact, we consider the
operator
\label{OpeTZ}
\begeq\begar{ll}
T_Zu(x,y) & \dis =\frac{-1}{\pi }\int_\Om u(\xi,\eta)\,\ke \, d\xi d\eta\\ \\
& \dis =
\frac{-1}{\pi }\int_\Om \frac{u(\xi,\eta)}{Z(\xi,\eta)-Z(x,y)} \, d\xi d\eta
\stopar\stopeq
We have the following theorem

\begin{theorem}\label{TZinLInfinty}
Let $\mu=\mu(\Om,L)$ be the \loj\ number of $L$ on $\Om$ and let $p>1+\mu$.
Then there exists a positive constant $M=M(\Om,p,\mu)$ such that
\begeq\label{TZEstimate}
\abs{T_Zf(x,y)}\,\le\, M\,\norm{f}_{L^p(\Om)}\qquad
\forall f\in L^p(\Om),\ \ \forall (x,y)\in \mathcal{O}\, .
\stopeq
\end{theorem}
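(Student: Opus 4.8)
The plan is to derive the estimate from a single application of Hölder's inequality, with the heavy lifting supplied by the uniform kernel bound of Proposition~\ref{KEstimate}. Assume first that $1+\mu<p<\infty$ and let $q=p/(p-1)$ be the conjugate exponent, so that $\frac1p+\frac1q=1$. Writing the operator in the form
\[
T_Zf(x,y)=\frac{-1}{\pi}\int_\Om f(\xi,\eta)\,\ke\, d\xi\, d\eta ,
\]
Hölder's inequality applied to the product $f\cdot\mathcal{K}_{(x,y)}$ gives at once
\[
\abs{T_Zf(x,y)}\le\frac1\pi\,\norm{f}_{L^p(\Om)}\,\norm{\mathcal{K}_{(x,y)}}_{L^q(\Om)}
\qquad\forall (x,y)\in\mathcal{O}.
\]

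The only point that needs checking is that the conjugate exponent $q$ lands in the admissible range of Proposition~\ref{KEstimate}. Since $p>1$ we have $q=1+\frac{1}{p-1}>1$, and since $p>1+\mu$ we have $p-1>\mu$, hence $\frac{1}{p-1}<\frac1\mu$ and therefore $q<1+\frac1\mu$. Thus $1<q<1+\frac1\mu$, which is exactly the hypothesis of Proposition~\ref{KEstimate}. That proposition then furnishes a constant $C=C(\Om,q,\mu)$, independent of $(x,y)$, with $\norm{\mathcal{K}_{(x,y)}}_{L^q(\Om)}\le C$ for every $(x,y)\in\mathcal{O}$. Combining this with the previous display yields
\[
\abs{T_Zf(x,y)}\le M\,\norm{f}_{L^p(\Om)},\qquad M=\frac{C}{\pi},
\]
with $M=M(\Om,p,\mu)$ independent of $(x,y)$, which is the assertion \eqref{TZEstimate}. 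The endpoint case $p=\infty$ reduces to the previous one: since $\Om$ is relatively compact, $L^\infty(\Om)\subset L^{p'}(\Om)$ with $\norm{f}_{L^{p'}(\Om)}\le\abs{\Om}^{1/p'}\norm{f}_{L^\infty(\Om)}$ for any fixed finite $p'$ with $1+\mu<p'<\infty$.

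I expect no genuine obstacle at this stage: the entire analytic difficulty of the theorem has been front-loaded into Proposition~\ref{KEstimate}, whose proof rests on the \loj\ inequalities of Section~4. Once the uniform $L^q$-bound for the generalized Cauchy kernel is available, the $L^\infty$-estimate for $T_Z$ is a formal consequence of $L^p$--$L^q$ duality, and the only thing to be careful about is the exponent bookkeeping $p>1+\mu \iff q<1+\frac1\mu$ carried out above.
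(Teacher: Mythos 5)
Your proof is correct and follows exactly the paper's own argument: a single application of H\"older's inequality to $f\cdot\mathcal{K}_{(x,y)}$, the observation that $p>1+\mu$ forces the conjugate exponent into the range $1<q<1+\frac{1}{\mu}$, and the uniform kernel bound of Proposition~\ref{KEstimate}. The exponent bookkeeping is right, and the extra remark on $p=\infty$ is a harmless addition not needed in the paper.
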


\begin{proof}
For $p>1+\mu$, the H\"{o}lder conjugate $q$ satisfies $\dis 1<q<1+\frac{1}{\mu}$.
The H\"{o}lder inequality gives
$\dis \abs{T_Zf(x,y)}\,\le\, \norm{{\mathcal{K}_{(x,y)}}}_{L^q(\Om)}\,\norm{f}_{L^p(\Om)}$
and (\ref{TZEstimate}) follows from Proposition (\ref{KEstimate}).
\end{proof}

\begin{proposition}\label{FinL1}
Let $f\in L^1(\Om)$, then $T_Z f\in L^q(\Om)$ for any $q$ with
$\dis 1<q<1+\frac{1}{\mu}$.
\end{proposition}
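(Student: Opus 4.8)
The plan is to bound the $L^q$ norm of $T_Zf$ directly by the $L^1$ norm of $f$ via Minkowski's integral inequality, using the kernel estimate already established in Proposition \ref{KEstimate}. Writing
\[
T_Zf(x,y)=\frac{-1}{\pi}\int_\Om f(\xi,\eta)\,\ke\,d\xi\, d\eta,
\]
I would view $T_Zf$ as a superposition, indexed by $(\xi,\eta)\in\Om$, of the functions $(x,y)\mapsto f(\xi,\eta)\,\ke$. Minkowski's integral inequality then gives
\[
\norm{T_Zf}_{L^q(\Om)}\le\frac{1}{\pi}\int_\Om\abs{f(\xi,\eta)}\left(\int_\Om\abs{\ke}^q\,dx\,dy\right)^{1/q}d\xi\,d\eta,
\]
so the whole estimate reduces to a uniform bound, over $(\xi,\eta)\in\Om$, on the inner $L^q$ norm computed with respect to the first pair of variables.

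The key observation is that the kernel is antisymmetric, hence symmetric in modulus: since $Z(x,y)-Z(\xi,\eta)=-\left(Z(\xi,\eta)-Z(x,y)\right)$, we have $\abs{\mathcal{K}_{(x,y)}(\xi,\eta)}=\abs{\mathcal{K}_{(\xi,\eta)}(x,y)}$. Consequently, for each fixed $(\xi,\eta)\in\Om\subset\mathcal{O}$,
\[
\int_\Om\abs{\mathcal{K}_{(x,y)}(\xi,\eta)}^q\,dx\,dy=\int_\Om\abs{\mathcal{K}_{(\xi,\eta)}(x,y)}^q\,dx\,dy=\norm{\mathcal{K}_{(\xi,\eta)}}_{L^q(\Om)}^q\le C^q,
\]
where the final inequality is exactly Proposition \ref{KEstimate} applied at the point $(\xi,\eta)$, which lies in $\mathcal{O}$ because $\Om\subset\mathcal{O}$, and is valid precisely because $1<q<1+\frac{1}{\mu}$. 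Feeding this uniform bound back into the Minkowski estimate yields $\norm{T_Zf}_{L^q(\Om)}\le\frac{C}{\pi}\norm{f}_{L^1(\Om)}$, which is the desired conclusion.

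I do not expect a genuine obstacle here; the argument is a standard Schur/Minkowski duality once the kernel bound is in hand. The only point requiring a moment's care is that Proposition \ref{KEstimate} controls the $L^q$ norm of the kernel in the integration variable uniformly over the parameter point ranging in $\mathcal{O}$, whereas Minkowski's inequality produces the norm in what was nominally the parameter slot; the modulus symmetry $\abs{\mathcal{K}_{(x,y)}(\xi,\eta)}=\abs{\mathcal{K}_{(\xi,\eta)}(x,y)}$ is exactly what reconciles the two, and one must verify that the point at which Proposition \ref{KEstimate} is invoked belongs to $\mathcal{O}$, which holds since $\Om\subset\mathcal{O}$. A brief measurability remark (the integrand is jointly measurable and nonnegative after taking moduli, and $\Om$ has finite measure) suffices to license the application of Minkowski's integral inequality, completing the rigor.
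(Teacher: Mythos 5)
Your argument is correct, and it reaches the conclusion by a genuinely different route than the paper. You apply Minkowski's integral inequality directly to
$\norm{T_Zf}_{L^q(\Om)}$, reduce to a uniform bound on $\int_\Om\abs{\ke}^q\,dx\,dy$ over $(\xi,\eta)$, and close the loop with the modulus symmetry $\abs{\ke}=\abs{\mathcal{K}_{(\xi,\eta)}(x,y)}$ together with Proposition \ref{KEstimate}. The paper instead runs a duality argument: it tests $f_1(\xi,\eta)=\int_\Om\abs{f}\,\abs{\mathcal{K}_{(\xi,\eta)}(x,y)}\,dx\,dy$ against an arbitrary $g\in L^p(\Om)$ (with $p$ the H\"older conjugate of $q$), uses Theorem \ref{TZinLInfinty} to see that $\int_\Om\abs{g}\,\abs{\ke}\,d\xi\,d\eta$ is bounded, applies Fubini with the same kernel antisymmetry you use, and concludes $f_1\in L^q$ by the converse of H\"older's inequality. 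Both proofs rest on the same two ingredients --- the antisymmetry of the kernel and the uniform $L^q$ kernel estimate --- but yours is more direct, avoids the detour through $L^p$ test functions and the converse H\"older inequality, and yields the explicit operator bound $\norm{T_Zf}_{L^q(\Om)}\le\frac{C}{\pi}\norm{f}_{L^1(\Om)}$, which the paper's version leaves implicit. Your closing remarks on measurability and on the point $(\xi,\eta)$ lying in $\mathcal{O}$ are exactly the right things to check, and they do hold here.
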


\begin{proof}
Let $\dis 1<q<1+\frac{1}{\mu}$ and $p >1+\mu$ be the H\"{o}lder conjugate of $q$.
Let $g$ be an arbitrary function in $L^p(\Om)$. It follows from Theorem \ref{TZinLInfinty} that
$g_1(x,y)\in L^\infty(\mathcal{O})$ where $g_1$ is given by
\[
g_1(x,y)=\int_\Om \abs{g(\xi,\eta)}\, \abs{\ke}\, d\xi\,d\eta\, .
\]
Therefore $fg_1\in L^1(\Om)$. Now we use Fubini's Theorem together with
$\mathcal{K}_{(x,y)}(\xi,\eta)=-\mathcal{K}_{(\xi,\eta)}(x,y)$ to obtain
\[
\dis\int_\Om\abs{f(x,y)}g_1(x,y)\, dx dy  =
\int_\Om\abs{g(\xi,\eta)}
\underbrace{\left(\int_\Om\abs{f(x,y)}\,\abs{\mathcal{K}_{(\xi,\eta)}(x,y)}dx\, dy\right)}_{f_1(\xi,\eta)}d\xi\,d\eta
\]
This means that $gf_1\in L^1(\Om)$. Since $g\in L^p(\Om)$ is arbitrary, then the reverse of the H\"{o}lder
inequality implies that $f_1\in L^q(\Om)$. We have $\abs{T_Zf}\,\le f_1$, and so $T_Zf\in L^q(\Om)$.
\end{proof}

Arguments similar to those used to prove Proposition 12 in \cite{CDM2}  can be applied to establish the
following

\begin{proposition}\label{CauchyIntegralFormula}
Let $w\in C(\ov{\Om})\cap C^1(\Om)$. Then for every $(x,y)\in \Om$, we have
\begeq\label{IntegralFormula}
2\pi i w(x,y)=\int_{\partial\Om}\!\!\!w(\xi,\eta)\, \ke\, dZ(\xi,\eta)\, +
\int_{\Om}\!\!\! Lw(\xi,\eta)\, \ke\,d\zeta\wedge d\ov{\zeta},
\stopeq
where $\zeta =\xi+i\eta$.
\end{proposition}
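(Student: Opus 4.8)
The plan is to prove a Cauchy--Pompeiu identity by observing that $\ke$ is annihilated by $L$ away from its pole, and then integrating by parts on $\Om$ with a small neighborhood of $(x,y)$ removed. Write $\omega=w(\xi,\eta)\,\ke\,dZ$, a $1$-form on $\Om\setminus\{(x,y)\}$, where $dZ=Z_\xi\,d\xi+Z_\eta\,d\eta$. Since $d(dZ)=0$ we have $d\omega=d(w\ke)\wedge dZ$, and expanding in the basis $d\xi\wedge d\eta$ gives $d\omega=L(w\ke)\,d\xi\wedge d\eta$ with $L=Z_\eta\pa_\xi-Z_\xi\pa_\eta$. The decisive point is that $\ke$ is $L$-holomorphic off its pole: because $LZ=0$ one gets $L\ke=-LZ/(Z(\xi,\eta)-Z(x,y))^2=0$, hence $L(w\ke)=(Lw)\ke$ and $d\omega=(Lw)\ke\,d\xi\wedge d\eta$. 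Using $d\zeta\wedge d\ov\zeta=-2i\,d\xi\,d\eta$ this is already the integrand of the area term on the right-hand side, up to the constant $1/(2i)$ produced by this identification.

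Next I would puncture and apply Stokes. For small $\ep>0$ set $B_\ep=Z^{-1}(\{|\,\cdot-Z(x,y)|<\ep\})$, the $Z$-preimage of a small disc, so that $(x,y)$ lies in the interior of $B_\ep$ and $Z(\pa B_\ep)$ is exactly the circle of radius $\ep$ centered at $Z(x,y)$. On $\Om_\ep=\Om\setminus\ov{B_\ep}$ the form $\omega$ is $C^1$, and Stokes' theorem yields $\int_{\pa\Om}\omega-\int_{\pa B_\ep}\omega=\int_{\Om_\ep}(Lw)\ke\,d\xi\,d\eta$, the inner circle carrying the induced positive orientation. Two routine technical points arise: since $w$ is only $C^1$ in the open set $\Om$ and continuous up to $\pa\Om$, I would first apply Stokes on domains exhausting $\Om$ from inside and recover $\pa\Om$ in the limit using $w\in C(\ov\Om)$; and since $Z$ is merely a homeomorphism along $\Si$, I would choose $\ep$ so that the circle meets the subanalytic set $Z(\Si)$ transversally, making $\pa B_\ep$ a piecewise $C^1$ Jordan curve.

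To evaluate the inner integral I would transfer it to the $Z$-plane. With $\zeta'=Z$ and $\tilde w=w\circ Z^{-1}$ (continuous, since $Z^{-1}$ is a homeomorphism), $\int_{\pa B_\ep}w\,\ke\,dZ=\int_{|\zeta'-Z(x,y)|=\ep}\tilde w(\zeta')(\zeta'-Z(x,y))^{-1}\,d\zeta'$, where the circle is traversed once positively because $Z$ is orientation-preserving. Splitting $\tilde w(\zeta')=\tilde w(Z(x,y))+(\tilde w(\zeta')-\tilde w(Z(x,y)))$ and using that on this circle $|\zeta'-Z(x,y)|=\ep$ is constant with total length $2\pi\ep$, the first term contributes $2\pi i\,w(x,y)$ and the second is $O(\sup_{|\zeta'-Z(x,y)|=\ep}|\tilde w(\zeta')-\tilde w(Z(x,y))|)\to0$ by continuity. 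Hence $\int_{\pa B_\ep}\omega\to2\pi i\,w(x,y)$ as $\ep\to0$. Finally, the only singularity of $(Lw)\ke$ in $\Om$ is at the interior point $(x,y)$, where $Lw$ is bounded and $\ke\in L^q$ locally by Proposition \ref{KEstimate}; thus $(Lw)\ke\in L^1$ near $(x,y)$ and $\int_{\Om_\ep}\to\int_\Om$. Combining the three limits and rewriting $d\xi\,d\eta$ through $d\zeta\wedge d\ov\zeta$ gives the stated formula.

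The step I expect to be the main obstacle is the residue/Stokes analysis at a characteristic point $(x,y)\in\Si$: there $Z$ is not a local diffeomorphism (the real Jacobian $-\mathrm{Im}(Z_x\ov{Z_y})$ vanishes on $\Si$), so $\pa B_\ep$ need not be smooth and the usual linearization of the kernel is unavailable. The remedy is exactly the two devices above: pushing the boundary integral forward by the homeomorphism $Z$ reduces the residue to the classical computation on a genuine circle in the $\zeta'$-plane, where winding number one and the constant radius $\ep$ make the estimate transparent, while the $L^q$ kernel bound of Proposition \ref{KEstimate} guarantees dominated convergence of the area integral up to the pole.
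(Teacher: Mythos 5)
Your overall strategy is the right one, and it is in substance the only proof on offer: the paper does not prove this proposition but refers to Proposition 12 of \cite{CDM2}, and the argument there is exactly the punctured--Stokes computation you carry out (compute $d(w\,\ke\,dZ)$ using $L\ke=0$, excise $B_\ep=Z^{-1}(D(Z(x,y),\ep))$, and evaluate the inner boundary term by pushing it forward to a genuine circle via the homeomorphism $Z$). Your two technical remarks --- exhausting $\Om$ from inside because $w$ is only $C^1$ in the interior, and using the homeomorphism property of $Z$ so that the residue computation and the estimate $\tilde w(\zeta')\to \tilde w(Z(x,y))$ survive even when $(x,y)\in\Si$ --- are precisely where the hypotheses enter, and invoking Proposition \ref{KEstimate} for the $L^1$ convergence of the area term is correct.

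There is, however, one concrete gap: the constant. You correctly observe that $d\bigl(w\,\ke\,dZ\bigr)=(Lw)\,\ke\,d\xi\wedge d\eta$ with $L=Z_\eta\pa_\xi-Z_\xi\pa_\eta$, and that $d\xi\wedge d\eta=\frac{1}{-2i}\,d\zeta\wedge d\ov{\zeta}$, so the area integrand matches the one in \eqref{IntegralFormula} only ``up to the constant $1/(2i)$'' --- and then you assert that the stated formula follows. It does not: carried through, your computation yields
\[
2\pi i\, w(x,y)=\int_{\pa\Om}w\,\ke\,dZ+\frac{1}{2i}\int_{\Om}Lw\,\ke\,d\zeta\wedge d\ov{\zeta}\, .
\]
That the factor $\frac{1}{2i}$ is genuinely present can be checked in the elliptic model $Z(\xi,\eta)=\zeta$: there $L=Z_\eta\pa_\xi-Z_\xi\pa_\eta=2i\,\pa_{\ov{\zeta}}$, and the displayed identity reduces to the classical Cauchy--Pompeiu formula $2\pi i\,w=\int_{\pa\Om}\frac{w\,d\zeta}{\zeta-z}+\int_\Om\frac{w_{\ov{\zeta}}}{\zeta-z}\,d\zeta\wedge d\ov{\zeta}$, whereas \eqref{IntegralFormula} read literally would carry an extra $2i$ on the area term. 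So either the proposition is to be understood with $L$ normalized so as to equal $\pa_{\ov{\zeta}}$ on the elliptic set (note the paper already uses two opposite signs for $L$, in \eqref{VecL} and at the start of Section 5), or the $\frac{1}{2i}$ must appear in \eqref{IntegralFormula}; in either case you cannot discharge the constant silently, and should state exactly which identity you have proved. Two smaller points: the claim that $Z(\pa B_\ep)$ has winding number $+1$ deserves a word (it follows because the real Jacobian of $Z$ is nonnegative in the normal forms and $Z$ is a homeomorphism, hence orientation-preserving); and the convergence $\int_{\Om_\ep}\to\int_\Om$ near $\pa\Om$ implicitly requires $(Lw)\,\ke\in L^1(\Om)$, which $w\in C(\ov{\Om})\cap C^1(\Om)$ alone does not guarantee --- an imprecision in the statement itself, harmless in the paper's application where $w$ is a test function.
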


\begin{theorem}\label{TZSolution}
If $f\in L^1(\Om)$, then $L(T_Zf)=f$. In particular if
$f\in L^p(\Om)$ with $p>1+\mu$, where $\mu$ is the \loj\ number of $L$ in
$\Om$, then $T_Zf$ is an $L^\infty$-solution of the equation
$Lu=f$.
\end{theorem}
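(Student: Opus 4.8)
The plan is to establish the distributional identity $L(T_Zf)=f$ by duality, reducing it to the already-proven Cauchy integral formula (Proposition \ref{CauchyIntegralFormula}). Since $f\in L^1(\Om)$ only guarantees $T_Zf\in L^q(\Om)$ (Proposition \ref{FinL1}), the equation $L(T_Zf)=f$ must be read in the sense of distributions, so the goal is to show
\[
\langle L(T_Zf),\varphi\rangle=\int_\Om f\varphi\,dx\,dy\qquad\forall\,\varphi\in C_c^\infty(\Om).
\]
First I would note that the coefficient vector field of $L$ in (\ref{VecL}) is divergence free ($\pa_x(-Z_y)+\pa_y(Z_x)=0$), so the formal transpose is $L^t=-L$; hence $\langle L(T_Zf),\varphi\rangle=-\langle T_Zf,L\varphi\rangle$, which moves all differentiation onto the test function and removes any need for a density/smoothing argument on $f$.

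Next I would substitute the integral definition of $T_Zf$ and interchange the order of integration. Fubini is justified by the uniform estimate $\norm{\mathcal{K}_{(x,y)}}_{L^1(\Om)}\le\abs{\Om}^{1/p}\norm{\mathcal{K}_{(x,y)}}_{L^q(\Om)}\le C$ coming from Proposition \ref{KEstimate} together with H\"older: since $L\varphi$ is bounded with compact support in $\Om$ and $f\in L^1(\Om)$, the double integral of the absolute values is finite. After swapping and invoking the antisymmetry $\mathcal{K}_{(x,y)}(\xi,\eta)=-\mathcal{K}_{(\xi,\eta)}(x,y)$, the quantity becomes
\[
-\langle T_Zf,L\varphi\rangle=\frac{-1}{\pi}\int_\Om f(\xi,\eta)\left(\int_\Om L\varphi(x,y)\,\mathcal{K}_{(\xi,\eta)}(x,y)\,dx\,dy\right)d\xi\,d\eta .
\]

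The decisive step is the evaluation of the inner integral. Applying Proposition \ref{CauchyIntegralFormula} to $w=\varphi$ with base point $(\xi,\eta)$, the boundary term vanishes because $\varphi$ is compactly supported, and using $d\zeta\wedge d\ov\zeta=-2i\,dx\,dy$ one obtains $\int_\Om L\varphi(x,y)\,\mathcal{K}_{(\xi,\eta)}(x,y)\,dx\,dy=-\pi\varphi(\xi,\eta)$. Substituting this back collapses the outer integral to $\int_\Om f\varphi$, which is precisely the desired weak identity. The second assertion is then immediate: for $f\in L^p(\Om)$ with $p>1+\mu$ one has $f\in L^1(\Om)$ (as $\Om$ is bounded), so $L(T_Zf)=f$ holds, while Theorem \ref{TZinLInfinty} gives $T_Zf\in L^\infty(\Om)$; thus $T_Zf$ is a bounded solution of $Lu=f$.

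I expect the only real obstacle to be careful bookkeeping of signs and normalizing constants across the two appearances of the kernel, namely the antisymmetry relation, the factor $d\zeta\wedge d\ov\zeta=-2i\,dx\,dy$, and the $2\pi i$ versus $\pi$ normalizations, together with a clean verification of the Fubini hypothesis. Conceptually there is no difficulty: Proposition \ref{CauchyIntegralFormula} already encodes that $\ke$ is a fundamental solution for $L$, and the identity $L(T_Zf)=f$ is exactly its weak (dual) form.
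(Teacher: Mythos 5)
Your proposal is correct and follows essentially the same route as the paper: both reduce the identity to the weak formulation via $^tL=-L$, interchange the integrals using the uniform kernel bound, invoke the antisymmetry $\mathcal{K}_{(x,y)}(\xi,\eta)=-\mathcal{K}_{(\xi,\eta)}(x,y)$, and apply Proposition \ref{CauchyIntegralFormula} to the test function (with vanishing boundary term) to collapse the inner integral. Your explicit justification of Fubini via the $L^q$ bound on the kernel is a detail the paper leaves implicit, but the argument is the same.
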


\begin{proof}
We know from Proposition (\ref{FinL1}) that for $f\in L^1(\Om)$, $T_Zf\in L^q(\Om)$ for
$\dis 1<q<1+\frac{1}{\mu}$. Note that the transpose of the operator $L$ is $^tL=-L$.
Let $\phi\in C^\infty_0(\Om)$ be a test function. Then by using (\ref{IntegralFormula}) for
$\phi$ and taking into account that $\phi=0$ on $\pa\Om$, we obtain
\[\begar{ll}
\left<L(T_Zf),\phi \right> &\dis = -\left<T_Zf,L\phi\right>\\ \\
& \dis =
\int_\Om\left[\frac{1}{\pi }\int_\Om f(\xi,\eta)\ke d\xi\, d\eta\right]L\phi(x,y)\, dx\, dy\\ \\
& \dis =
\int_\Om f(\xi,\eta)\left[\frac{1}{2\pi i}\int_\Om L\phi(x,y)\mathcal{K}_{(\xi,\eta)}(x,y) dz\wedge d\ov{z}\right]\, d\xi\, d\eta\\ \\
& \dis =\left<f,\phi\right>
\stopar\]
The second part of the statement of the Theorem follows from Theorem \ref{TZinLInfinty}.
\end{proof}

Now we establish a similarity principle between the solutions of the equation
$Lu=Au+B\ov{u}$ and holomorphic functions in $Z(\Om)$. More precisely, we have
the following

\begin{theorem}\label{Similarity}
Let $A,\, B\, \in L^p(\Om)$ with $p>1+\mu$, where $\mu$ is the \loj\ number of
$L$ in $\Om$. If $u\in L^\infty(\Om)$ satisfies the equation
\begeq\label{LuAuBubar}
Lu=Au+B\ov{u}\, ,
\stopeq
then there exists a holomorphic function $H$ in $Z(\Om)$ and a function
$s\in L^\infty(\Om)$ such that
\begeq\label{UHeS}
u(x,y)=H(Z(x,y))\, \ei{s(x,y)}\, .
\stopeq
\end{theorem}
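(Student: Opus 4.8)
The plan is to follow the classical Bers--Vekua similarity method: absorb the coefficients of \eqref{LuAuBubar} into a single scalar multiplier, invert $L$ on that multiplier with the operator $T_Z$, and thereby reduce to a homogeneous equation $L\Phi=0$ to which the fundamental hypocomplex property recalled in Section 2 applies. First I would set
\[
g=A+B\,\frac{\ov{u}}{u}\quad\text{on }\{u\ne 0\},\qquad g=A\quad\text{on }\{u=0\},
\]
so that $Lu=gu$ throughout $\Om$. Since $\abs{\ov{u}/u}=1$ where $u\ne 0$, we have $\abs{g}\le\abs{A}+\abs{B}$, hence $g\in L^p(\Om)$ with $p>1+\mu$. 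Putting $s=T_Zg$, Theorem \ref{TZinLInfinty} gives $s\in L^\infty(\Om)$ and Theorem \ref{TZSolution} gives $Ls=g$ in the distributional sense.

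Next I would form $\Phi=u\,\ei{-s}$ and verify that $L\Phi=0$. Formally,
\[
L\Phi=(Lu)\ei{-s}-u(Ls)\ei{-s}=gu\,\ei{-s}-ug\,\ei{-s}=0 .
\]
The main obstacle is precisely the justification of this computation at low regularity: $u$ and $s$ are only bounded, with $L$-derivatives $Lu=gu\in L^p$ and $Ls=g\in L^p$, so the chain and Leibniz rules are not immediate. I would handle this by a mollification argument: choose smooth $g_n\to g$ in $L^p(\Om)$ and set $s_n=T_Zg_n$; then Theorem \ref{TZinLInfinty} applied to $g-g_n$ yields $\norm{s-s_n}_{L^\infty(\Om)}\le M\norm{g-g_n}_{L^p(\Om)}\to 0$, so $\ei{-s_n}\to\ei{-s}$ uniformly. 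Since $\ei{-x}$ is smooth and $s\in L^\infty$ with $Ls\in L^p$, the chain rule gives $L(\ei{-s})=-(Ls)\ei{-s}=-g\,\ei{-s}$ distributionally, and the product rule for two bounded functions whose $L$-derivatives lie in $L^p$ then yields $L\Phi=(Lu)\ei{-s}+u\,L(\ei{-s})=0$. These are the standard Sobolev-type calculus facts adapted to $L$; they are available here because $L$ is elliptic off the characteristic set $\Si$ and the estimates of Section 5 control the contributions near $\Si$. Note also that $\ei{-s}$ and $\ei{s}$ are both bounded (with $\abs{\ei{-s}}=\ei{-\textrm{Re}(s)}$ bounded above and below), so $\Phi\in L^\infty(\Om)$.

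Finally, $\Phi\in L^\infty(\Om)$ satisfies $L\Phi=0$, so the fundamental property of hypocomplex vector fields recalled in Section 2 applies: near each point of $\Om$, with the local normal first integral $Z_{\mathrm{loc}}$ there, one has $\Phi=h\circ Z_{\mathrm{loc}}$ for some holomorphic $h$. Since any two first integrals of $L$ are conformally related, $Z_{\mathrm{loc}}$ is a holomorphic function of the global first integral $Z$, and hence locally $\Phi=\tilde h\circ Z$ for a holomorphic $\tilde h$. Because $Z:\Om\to Z(\Om)$ is an injective homeomorphism onto the open set $Z(\Om)$, the single-valued function $H=\Phi\circ Z^{-1}$ is well defined on $Z(\Om)$ and agrees locally with each $\tilde h$, so $H$ is holomorphic on $Z(\Om)$ and $\Phi=H\circ Z$. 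Multiplying back by $\ei{s}$ gives $u=\Phi\,\ei{s}=H(Z)\,\ei{s}$, which is \eqref{UHeS} with $s\in L^\infty(\Om)$, completing the proof.
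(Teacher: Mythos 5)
Your proposal is correct and follows essentially the same route as the paper: define the bounded multiplier $A+B\ov{u}/u$ (extended by a bounded value on the zero set of $u$), set $s=T_Z$ of it so that $s\in L^\infty(\Om)$ by Theorem \ref{TZinLInfinty} and $Ls$ equals that multiplier by Theorem \ref{TZSolution}, observe that $v=u\ei{-s}$ satisfies $Lv=0$, and invoke hypocomplexity to factor $v=H\circ Z$. The only difference is that you spell out the low-regularity Leibniz/chain-rule justification and the local-to-global patching of $H$, steps the paper leaves implicit.
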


\begin{proof}
Let $u\in L^\infty(\Om)$ be a solution of (\ref{LuAuBubar}).
Since $L$ is elliptic in $\mathcal{O}\backslash\Si$, then if $u=0$ on a set with
an accumulation in $\Om$, then $u=0$ everywhere in $\Om\backslash\Si$.
For $u$ not identically zero,  consider the function
$\chi$ given by $\dis \chi =\frac{\ov{u}}{u} $ at the points where $u\ne 0$
and $\chi=0$ when $u=0$.  Then $\abs{\chi}\le 1$ and  $A+B\chi \in L^p(\Om)$.
It follows from Theorem \ref{TZSolution} that the function
$s=T_Z(A+B\chi)\in L^\infty(\Om)$ and satisfies
\begeq\label{EqforS}
uLs =Au+B\ov{u}\, .
\stopeq
Consider the function $v\in L^\infty(\Om)$ given by $v=u\ei{-s}$. Then
it follows from (\ref{EqforS}) that $Lv=0$ in $\Om$. Since $L$ is hypocomplex,
then $v$ factors as $v=H\circ Z$ for some holomorphic function $H$ defined
in $Z(\Om)$ and (\ref{UHeS}) follows.
\end{proof}

\begin{remark}
The results of this paper are stated for real analytic structures but they can be extended
for any structure that satisfies the \loj\ inequality in neighborhoods of points of
the characteristic set $\Si$. In particular, if
$Z:\, \mathcal{O}\, \longrightarrow\, Z(\mathcal{O})\subset\C$ is $C^{1+\alpha}$-homeomorphism
($0<\alpha <1$) and if $Z$ is real analytic in a tubular neighborhood of $\Si$, then a \loj\
number $\mu$  can be attached to $L$ on any relatively compact open set $\Om\subset\mathcal{O}$
and Theorems \ref{TZinLInfinty}, \ref{TZSolution}, and \ref{Similarity} hold.
In fact the real analyticity of $Z$ near $\Si$ can also be weakened to only assume that $Z$ is
in Denjoy-Carleman classes, since functions in these classes satisfy the \loj\ inequality
(see \cite{Bi-Mi2} or \cite{Vol}).

Hypotheses on $f$ and $A,\, B$ in equations $Lu=f$ and $Lu=Au+B\ov{u}$ can also be weakened
to assume that $f,\, A,\,  B$ are in $L^{p_0}(\Om)$ for some $p_0>2$ and in
$L^{p_i}(U_i)$ with $p_i>1+\mu_i$ in each normal form chart $U_i$ with \loj\ number $\mu_i$.
\end{remark}

\end{document}